\newtheorem{theorem}{Theorem}
\newtheorem{lemma}{Lemma}
\newtheorem{remark}{Remark}
\newtheorem{proposition}{Proposition}
\newtheorem{corollary}{Corollary}
\newcommand{\Eq}{\ensuremath{\mathsf{Eq}(\mathbb C)}}
\newcommand{\Par}{\ensuremath{\mathsf{ParOrd}(\mathbb C)}}
\newcommand{\C}{\mathbb C}
\newcommand{\N}{\mathcal N}
\def\Alex{\mathsf{Alex}}
\def\KoAlex{\mathsf{T_0Alex}}
\def\PartAlex{\mathsf{PartAlex}}
\begin{document}

\title{A new Galois structure in the category of internal preorders}
\author{Alberto Facchini, Carmelo Finocchiaro and Marino Gran}
\keywords{Internal preorders, partial orders, Galois theory, monotone-light factorization system, Alexandroff-discrete spaces. \\ \noindent 2020 Mathematics Subject Classification: 18E50, 18A32, 18B35,  18E40, 06A15 }

\thanks{The first author was partially 
	supported by Ministero dell'Istruzione, dell'Universit\`a e della 
	Ricerca (Progetto di ricerca di rilevante interesse nazionale 
	``Categories, Algebras: Ring-Theoretical and Homological Approaches 
	(CARTHA)'') and Dipartimento di Matematica ``Tullio Levi-Civita'' of 
	Universit\`a di Padova (Research program DOR1828909 ``Anelli e categorie 
	di moduli''). The second author was partially supported by GNSAGA of Istituto Nazionale di Alta Matematica, by Dipartimento di Matematica ``Tullio Levi-Civita'' of Universit\`a di Padova (Research program DOR1828909 ``Anelli e categorie di moduli'') and by Dipartimento di Matematica e Informatica of Universit\`a di Catania (Research program ``Propriet\`a algebriche locali
	e globali di anelli associati a curve e ipersuperfici" PTR 2016-18). The first and the second authors were partially supported by the research program ``Reducing complexity in algebra, logic, combinatorics - REDCOM'' ``Ricerca Scientifica di Eccellenza 2018'' Fondazione Cariverona. The third author was supported by a ``Visiting scientist scholarship - year 2019'' by Universit\`a di Padova, and by a collaboration project \emph{Fonds d'Appui \`a l'Internationalisation} of the Universit\'e catholique de Louvain.}  

\copyrightyear{2019}\maketitle

\begin{abstract} Let $\mathsf{PreOrd}(\mathbb C)$ be the category of internal preorders in an exact category $\mathbb C$. We show that the pair $(\mathsf{Eq}(\mathbb C), \mathsf{ParOrd}(\mathbb C))$ is a pretorsion theory in $\mathsf{PreOrd}(\mathbb C)$, where $\mathsf{Eq}(\mathbb C)$ and $\mathsf{ParOrd}(\mathbb C)$) are the full subcategories of internal equivalence relations and of internal partial orders in $\mathbb C$, respectively. We observe that $\mathsf{ParOrd}(\mathbb C)$ is a reflective subcategory of $\mathsf{PreOrd}(\mathbb C)$ such that each component of the unit of the adjunction is a pullback-stable regular epimorphism. The reflector $F:\mathsf{PreOrd}(\mathbb C)\to \mathsf{ParOrd}(\mathbb C)$ turns out to have stable units in the sense of Cassidy, H\'ebert and Kelly, thus inducing an admissible categorical Galois structure. In particular, when $\mathbb C$ is the category $\mathsf{Set}$ of sets, we show that this reflection induces a monotone-light factorization system (in the sense of Carboni, Janelidze, Kelly and Par\'e) in $\mathsf{PreOrd}(\mathsf{Set})$. A topological interpretation of our results in the category of Alexandroff-discrete spaces is also given, via the well-known isomorphism between this latter category and $\mathsf{PreOrd}(\mathsf{Set})$.
\end{abstract}

\section*{Introduction}
The category $\Par$ of internal partial orders in an exact \cite{Barr} category $\C$ is reflective in the category ${\mathsf{PreOrd}(\mathbb C) \, }$ of internal preorders in $\C$:
\begin{equation}\label{adjunction1}
\xymatrix@=30pt{
{\mathsf{PreOrd}(\mathbb C) \, } \ar@<1ex>[r]_-{^{\perp}}^-{F} & {\, \Par.\, }
\ar@<1ex>[l]^U  }
 \end{equation}
Here $U \colon \Par \rightarrow \mathsf{PreOrd}(\mathbb C)$ is the forgetful functor, and $F \colon \mathsf{PreOrd}(\mathbb C) \rightarrow \Par$ is the reflector, which has a simple description (given in Section $1$), and is such that each component of the unit of the adjunction \eqref{adjunction1} is a pullback-stable regular epimorphism in $\mathsf{PreOrd}(\mathbb C)$. 

We first show that the subcategory $\Par$ can be seen as a torsion-free subcategory for a non-abelian torsion theory in the category $\mathsf{PreOrd}(\mathbb C)$. 
In order to define a notion of torsion theory in a category which does not have a zero object, such as $\mathsf{PreOrd}(\mathbb C)$, the notion of short exact sequence has to be made more flexible, as we now recall.
 
 There is a natural notion of short $\N$-exact sequence, which is defined relatively to an \emph{ideal $\N$ of morphisms}  in the sense of Ehresmann \cite{Ehr64} (see also \cite{Mantovani, GJ-2019}, and the references therein).
 
By an ideal $\N$ of morphisms in a category $\mathbb A$ is meant a distinguished class of morphisms in $\mathbb A$ with the following property: for any composable pair of morphisms 
\begin{equation}\label{composable}
\xymatrix{ A \ar[r]^f & B \ar[r]^g &C }\end{equation}
in $\mathbb A$, if either $g$ or $f$ belongs to $\N$, then the composite $gf$ belongs to $\N$. 
When $\mathbb A$ is a category and $\mathcal Z$ a (non-empty, full and replete) subcategory of $\mathbb A$, we write $\N(A, B)$ for the set of morphisms (in $\mathbb A$) from $A$ to $B$  which factor through an object of $\mathcal Z$. This class of morphisms obviously forms an ideal $\N$ in $\mathbb A$. The objects of $\mathcal Z$, which fully determine $\N$, are called the \emph{trivial objects} of $\mathbb A$.
 
 Given a morphism $f\colon A\to B$ in $\mathbb A$, one says that $\kappa \colon K \to A$ in $\mathbb A $ is an \emph{$\N$-kernel} of $f$ (or a $\mathcal Z$\emph{-prekernel} of $f$, following \cite{FF, pretorsion}), if $f\kappa \in \mathcal N$ and, moreover,
whenever $\lambda \colon X\to A$ is a morphism in $\mathbb A$ with $f\lambda \in \N$, there is a unique morphism $\lambda'\colon X \to K$ in $\mathbb A$ such that $\lambda=\kappa \lambda'$. 

Of course, if $\mathcal Z = \{ 0\}$, so that $\mathcal Z$ is reduced to the zero-object $0$ of $\mathbb A$ (when this latter exists), the notion of $\N$-kernel gives back the usual notion of kernel. The notion of $\N$-cokernel is defined dually, and we then get the needed general notion of short $\N$-exact sequence (also called $\mathcal Z$-preexact sequence in \cite{FF, pretorsion}).
Indeed, a pair of composable arrows as in diagram \eqref{composable} is a \emph{short $\N$-exact sequence} if $f$ is the $\N$-kernel of $g$, and $g$ is the $\N$-cokernel of $f$.

A \emph{pretorsion theory} $(\mathcal T, \mathcal F)$ in $\mathbb A$ is a pair of (full replete) subcategories of $\mathbb A$ with the following properties:
if $\N$ is the ideal of morphisms associated with the full replete subcategory $\mathcal Z : = \mathcal T \wedge \mathcal F$ of \emph{trivial objects}, then
\begin{enumerate}
\item any morphism $T \rightarrow F$ from an object $T \in \mathcal T$ to an object $F \in \mathcal F$ belongs to $\N$, i.e. it factors through an element in $\mathcal Z$;
\item for any object $A \in \mathbb A$ there is a short $\N$-exact sequence $$\xymatrix{ T(A)  \ar[r]^k & A \ar[r]^p &F(A) }$$
with $T(A) \in \mathcal T$ and $F(A) \in \mathcal F$.
\end{enumerate}
This general notion of pretorsion theory \cite{FF, pretorsion} includes the classical one in the pointed case (\cite{Dickson, BG}), and is in particular suitable for our study of the reflection \eqref{adjunction1} above. Indeed, in that example the pretorsion theory is given by the pair of (full replete) subcategories $(\Eq,\Par)$ in the category $\mathsf{PreOrd}(\mathbb C)$, where $\Eq$ is the category of internal equivalence relations in $\C$ (see Lemma \ref{N-torsion}). The ideal $\N$ of morphisms consists then in the morphisms in $\mathsf{PreOrd}(\mathbb C)$ that factor through the objects in $\mathcal Z = \Eq \wedge \Par = \mathsf{Dis}(\mathbb C)$, where $\mathsf{Dis}(\mathbb C)$ is the (full replete) subcategory of $\mathsf{PreOrd}(\mathbb C)$ whose objects are the \emph{discrete equivalence relations} in $\C$.
The functor $F \colon \mathsf{PreOrd}(\mathbb C) \rightarrow \Par$ in \eqref{adjunction1} is the reflector to the torsion-free subcategory $\Par$ of this pretorsion theory in $\mathsf{PreOrd}(\mathbb C)$.
In Theorem \ref{semi-localization} this reflector is shown to have \emph{stable units} (in the sense of \cite{CHK}), 
and the corresponding factorization system $(\mathcal E, \mathcal M)$, where $\mathcal E$ is the class of morphisms inverted by $F$ and $\mathcal M$ the class of \emph{trivial coverings} for the induced Galois structure (in the sense of \cite{Janelidze-Galois}) is described in Proposition \ref{factor-E}. Note that any full reflective subcategory with a reflector having stable units always determines an admissible Galois structure (with respect to the classes of \emph{all} morphisms in both the categories, see \cite{CJKP} for more details).
The class ${\mathcal M}^*$ of \emph{coverings} for this Galois theory is then described in the special case where ${\mathbb C}$ is the exact category $\mathsf{Set}$ of sets (Proposition \ref{character.covering}). The class ${\mathcal M}^*$ is also part of a \emph{monotone-light factorization system}  $({\mathcal E}', {\mathcal M}^*)$ (in the sense of \cite{CJKP}), where ${\mathcal E}'$ is the subclass of $\mathcal E$ whose elements are the morphisms in $\mathcal E$ which are pullback stable (Proposition \ref{monotone-light}).

 It is well known that the category $\mathsf{PreOrd(Set)}$ of internal preorders in the category of sets is isomorphic to the category of \emph{Alexandroff-discrete} topological spaces. Thanks to this topological interpretation the monotone-light factorization system for $\mathsf{PreOrd(Set)}$ admits an alternative description, given in Proposition \ref{monotone-light-top}. In particular, the \emph{coverings} induced by the Galois structure mentioned above are precisely the continuous maps that have $T_0$ fibres.
   The fact that the reflector $F:\mathsf{PreOrd}(\mathsf{Set})\to \mathsf{ParOrd}(\mathsf{Set})$ has stable units was already mentioned in Xarez's thesis \cite{Xarez-Thesis}, where some monotone-light factorization systems in the category $\mathsf{Cat}$ of (small) categories were also investigated. \\
 
{ \bf Acknowledgement.} The authors would like to thank to the anonymous referee for several useful suggestions on the first version of the paper.
 
\section{Internal preorders in an exact category.}
In this section $\mathbb C$ will denote an exact category \cite{Barr}. This means that:
\begin{itemize}
\item$\C$ is a finitely complete category;
\item any morphism $f \colon A \rightarrow B$ in $\C$ has a (unique) factorization $f = i p$
$$\xymatrix{A \ar[rr]^{f} \ar@{>>}[dr]_p  & &  B \\
&{I\,\,  } \ar@{>->}[ur]_i & 
}$$
where $p$ is a regular epimorphism and $i$ is a monomorphism (the subobject $i : I \rightarrow B$ will be called the \emph{regular image} of $f$);
\item regular epimorphisms are pullback-stable: in any pullback
$$
\xymatrix{E {\times}_B A \ar@{>>}[r]^-{p_2}
 \ar[d]_{p_1} & A \ar[d]^f \\
 E \ar@{>>}[r]_p & B
}
$$
the morphism $p_2$ is a regular epimorphism whenever $p$ is so;
\item any (internal) equivalence relation $\langle r_1,r_2 \rangle \colon {R} \rightarrow {X \times X}$ on an object $X$ in $\C$ is \emph{effective}, i.e., $R$ is the kernel pair  
$$
\xymatrix{R \ar[r]^-{r_2} \ar[d]_{r_1}& X \ar[d]^f \\
X \ar[r]_f & Y.
}
$$
of some arrow $f \colon X \rightarrow Y$ in $\C$. The kernel pair of $f$ will be denoted by $Eq(f)$.
\end{itemize}
Any variety of universal algebras is an exact category, in particular the varieties of (pointed) sets, semigroups, monoids, commutative monoids, groups, abelian groups, Lie algebras, (commutative) rings, etc. Any elementary topos is an exact category, as is also any abelian category. The categories of $\mathsf{C}^*$-algebras and the category of compact Hausdorff spaces are also exact. It is well known that the category $\mathsf{Top}$ of topological spaces is not exact, since regular epimorphisms are not pullback-stable.
 
Given two relations $\langle r_1,r_2 \rangle \colon {R} \rightarrow {X \times Y}$ and $\langle s_1,s_2 \rangle  \colon {S} \rightarrow {Y \times Z}$ in an exact category, their relational composite ${S\circ R} \rightarrow {X \times Z}$ can be defined as follows: take the pullback 
$$\xymatrix{R \times_Y S \ar[r]^-{p_2} \ar[d]_{p_1} & S \ar[d]^{s_1}\\
	R \ar[r]_{r_2} & Y
}
$$
of $r_2$ and $s_1$, and then the regular image $S \circ R$ of the arrow $\langle r_1p_1,s_2 p_2\rangle$:
$$
\xymatrix{R\times_Y S \ar[rr]^-{\langle r_1p_1,s_2 p_2\rangle} \ar@{>>}[dr] & & X\times Z. \\
& {S\circ R \quad} \ar@{>->}[ur]}
$$
In the category $\mathsf{Set}$ of sets one clearly has $$S \circ R = \{ (x,z) \in X \times Z \, \mid \, \exists y \in Y, (x,y)\in R, (y,z) \in S \},$$i.e. the usual composite of the relations $S$ and $R$. It is well known that the composition of relations is associative when $\mathbb C$ is a regular category, thus in particular when $\mathbb C$ is an exact category.

Consider then the category $\mathsf{PreOrd}(\mathbb C)$ of (internal) preorders in $\C$. An object $(A, \rho)$ in $\mathsf{PreOrd}(\mathbb C)$ is simply a relation $\langle r_1,r_2 \rangle \colon \rho \rightarrow A \times A$ on $A$ that is \emph{reflexive}, i.e. it contains the ``discrete relation'' $\langle 1_A,1_A \rangle \colon A \rightarrow A \times A$ on $A$ - the ``equality relation'', denoted by $\Delta_A$ -
and $\rho$ is \emph{transitive}: there is a
morphism $\tau \colon \rho \times_A \rho \rightarrow \rho$ such that $r_1 \tau =  r_1 p_1$ and $r_2 \tau =  r_2 p_2$, where $\rho \times_A \rho$ is the ``object part'' of the pullback
$$
\xymatrix{ \rho \times_A \rho \ar[r]^-{p_2} \ar[d]_-{p_1}& \rho \ar[d]^{r_1} \\
\rho \ar[r]_{r_2} & A.
}
$$ 
 
A morphism $(A, \rho) \rightarrow  (B, \sigma)$ in the category $\mathsf{PreOrd}(\mathbb C)$ is a pair of morphisms $(f,\overline{f})$ in $\C$ making the following diagram commute:
$$\xymatrix{  \rho \ar@<.5ex>[d]^{r_2} \ar@<-.5ex>[d]_{r_1}  \ar[r]^{\overline{f}}  & \sigma \ar@<.5ex>[d]^{s_2} \ar@<-.5ex>[d]_{s_1}  \\
 A \ar[r]_{f} & {B.} &
}
$$
By this we mean that $f r_1= s_1 \overline{f} $ and $f r_2= s_2 \overline{f}$ (this convention will occur often in the sequel).
The category $\Eq$ is the full subcategory of $\mathsf{PreOrd}(\mathbb C)$ whose objects are (internal) \emph{equivalence relations} in $\C$, that is, those preorders $(A, \rho)$ in $\C$ that are also \emph{symmetric}: this means that there is a morphism $\gamma \colon \rho \rightarrow \rho$ with $r_1 \gamma = r_2$ and $r_2 \gamma = r_1$. Equivalently, one can ask that $\rho^o = \rho$, where $\rho^o = \langle r_2, r_1 \rangle \colon \rho \rightarrow A \times A$ is the opposite relation of $\rho = \langle r_1, r_2 \rangle \colon \rho \rightarrow A \times A$.
$\Par$ will denote the category of (internal) \emph{partial orders} in $\C$, where a preorder $(A, \rho)$ is a partial order when, moreover, the relation $\rho$ is also \emph{antisymmetric}. This property can be expressed by the fact that the subobject $\sim_{\rho}  = \rho \wedge \rho^o$ obtained as the diagonal of the pullback
\begin{equation}\label{sim}
\xymatrix{ \sim_{\rho} \ar[r]^-{} \ar[d]_-{} & \rho^o \ar[d]^{\langle r_2, r_1 \rangle} \\
\rho \ar[r]_-{\langle r_1, r_2 \rangle }  & A \times A,}
\end{equation}
 is the discrete relation $(A, \Delta_A)$ on $A$. 
\begin{remark}\label{equivalence-relation}
\emph{ Given \emph{any} preorder $(A, \rho)$, the relation $\sim_{\rho}$ defined by the pullback \eqref{sim} is an equivalence relation: indeed, $\sim_{\rho}$ is obviously reflexive and transitive, and  it is
symmetric since
$$\sim_{\rho}^o  = (\rho \wedge \rho^o)^o=  \rho^o \wedge \rho = {\sim}_{\rho}.$$ }
\end{remark}
\begin{remark}\label{equivalence}
\emph{It is clear that a preorder $(A, \rho)$ belongs to both $\Eq$ and $\Par$ if and only if the order relation $\rho$ is the discrete relation $ \Delta_A$ on $A$, since in that case $\Delta_A = {\sim}_{\rho} = \rho \wedge \rho^o = \rho$. If we write $\mathsf{Disc}(\C)$ for the full subcategory of $\mathsf{PreOrd}(\mathbb C)$ whose objects are the preorders $(A, \Delta_A)$ equipped with the discrete relation, then  $$\Eq \wedge \Par = \mathsf{Dis}(\C).$$ }
\end{remark}
In this section $\N$ will always denote the ideal of morphisms in $\mathsf{PreOrd}(\mathbb C)$ that factor through its full subcategory $\mathsf{Dis}(\C)$ of discrete relations. 
\begin{lemma}\label{N-torsion}
The pair of full subcategories $(\Eq, \Par)$ is a pretorsion theory in $\mathsf{PreOrd}(\mathbb C)$.
\end{lemma}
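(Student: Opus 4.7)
The plan is to verify the two defining conditions of a pretorsion theory, having observed (via Remark \ref{equivalence}) that $\Eq \wedge \Par = \mathsf{Dis}(\C)$, so that the ideal $\N$ is exactly the one declared just before the lemma.

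For condition (1), consider a morphism $(f, \bar f)\colon (A,\rho) \to (B,\sigma)$ with $(A,\rho)\in\Eq$ and $(B,\sigma)\in\Par$. Using the symmetry $\gamma \colon \rho \to \rho$, the arrow $\bar f \gamma\colon \rho \to \sigma$ gives $\langle f r_2, f r_1\rangle = \langle s_1 \bar f \gamma, s_2 \bar f \gamma \rangle$, so the image of $\bar f$ lies in both $\sigma$ and $\sigma^o$, hence in $\sigma \wedge \sigma^o = \Delta_B$ by antisymmetry. Therefore $f r_1 = f r_2$, and effectivity of $\rho$ in the exact category $\C$ produces a quotient $q\colon A \to A/\rho$ with $f = \tilde f \circ q$. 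Viewing $(A/\rho, \Delta_{A/\rho})$ as a discrete preorder and using the reflexivity arrow $\Delta_B \hookrightarrow \sigma$ to lift $\tilde f$ to a preorder morphism, $(f, \bar f)$ factors through an object of $\mathsf{Dis}(\C)$.

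For condition (2), given $(A,\rho) \in \mathsf{PreOrd}(\mathbb C)$, I would set $T(A) := (A, \sim_\rho)$, which lies in $\Eq$ by Remark \ref{equivalence-relation}, and let $F(A)$ be the value at $(A,\rho)$ of the reflector from \eqref{adjunction1}, whose underlying object is $A/\sim_\rho$ and whose order $\bar\rho$ is the regular image of $\langle p r_1, p r_2\rangle$, hence a partial order. The inclusion $\sim_\rho \hookrightarrow \rho$ gives a preorder morphism $k\colon (A,\sim_\rho) \to (A,\rho)$ that is the identity on underlying objects, while the unit supplies $p\colon (A,\rho) \to F(A)$; the composite $pk$ belongs to $\N$ by condition (1). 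To verify that $k$ is the $\N$-kernel of $p$, suppose $\lambda = (\ell, \bar \ell)\colon (X,\tau) \to (A,\rho)$ satisfies $p\lambda \in \N$: since any morphism of preorders into a discrete object coequalizes the projections of the source relation, $p \ell$ coequalizes $r_1^\tau, r_2^\tau$, so $\langle r_1^\rho \bar\ell, r_2^\rho \bar\ell\rangle$ is coequalized by $p \times p$, and effectivity of $\sim_\rho = Eq(p)$ yields the unique lift of $\bar\ell$ through $\sim_\rho \hookrightarrow \rho$. Dually, for the $\N$-cokernel property of $p$, if $\mu = (g, \bar g)\colon (A,\rho) \to (Y,\sigma)$ satisfies $\mu k \in \N$, then $g$ coequalizes the projections of $\sim_\rho$, yielding a unique $g'$ with $g = g' p$; the extension of $g'$ to a preorder morphism $F(A) \to (Y,\sigma)$ is then obtained by applying the diagonal fill-in for the regular epi-mono factorization in $\C$ to $\rho \twoheadrightarrow \bar\rho \hookrightarrow (A/\sim_\rho)\times(A/\sim_\rho)$ together with $\bar g\colon \rho \to \sigma$.

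The main obstacle is bookkeeping: at each stage one must simultaneously track the data on underlying objects and on the relation parts, and repeatedly invoke the exactness of $\C$ (effectivity of equivalence relations, pullback-stability of regular epimorphisms, and the regular epi-mono factorization) in order to lift constructions from $\C$ up to $\mathsf{PreOrd}(\mathbb C)$.
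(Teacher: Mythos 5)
Your overall architecture is the same as the paper's: condition (1) via symmetry plus antisymmetry forcing $fr_1=fr_2$ and a factorization through a discrete quotient, and condition (2) via the sequence $(A,\sim_\rho)\to(A,\rho)\to(\frac{A}{\sim_\rho},\pi(\rho))$, with the $\N$-kernel property checked through the kernel-pair universal property of $\sim_\rho=Eq(\pi)$ and the $\N$-cokernel through the coequalizer property of $\pi$ together with a diagonal fill-in. Those parts are fine. The genuine gap is in condition (2), where you declare that the regular image order on $\frac{A}{\sim_\rho}$ is ``hence a partial order'' because it is ``the value of the reflector from \eqref{adjunction1}''. In the paper's logical order, the reflection \eqref{adjunction1}, with exactly this description of $F$ and of its unit, is a \emph{consequence} of Lemma \ref{N-torsion} (see Corollary \ref{adj-pre-par}); the introduction only announces it. So invoking the reflector here is circular, and even an abstract existence argument for a left adjoint would not identify $F(A,\rho)$ with $(\frac{A}{\sim_\rho},\pi(\rho))$ — an identification your kernel and cokernel verifications rely on (you use $\sim_\rho=Eq(\pi)$ and that $\pi$, $\overline{\pi}$ are regular epimorphisms).

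Moreover, in a general exact category this is precisely the nontrivial content of the lemma: one must prove that the image relation $\pi(\rho)$ is transitive and, above all, antisymmetric; quotienting a preorder by an arbitrary equivalence relation contained in it does not yield a partial order, and the argument is special to $\sim_\rho=\rho\wedge\rho^o$. The paper does this with the calculus of relations: transitivity from $\pi(\rho)=\pi\circ\rho\circ\pi^o$ and $\pi^o\circ\pi={\sim_\rho}$, so that $\pi(\rho)\circ\pi(\rho)=\pi\circ\rho\,\circ\sim_\rho\circ\,\rho\circ\pi^o\le\pi\circ\rho\circ\pi^o$; and antisymmetry by first showing $\rho=\pi^{-1}(\pi(\rho))$, i.e. that the square \eqref{direct-inverse} is a pullback, and then, via the cube of pullbacks, that $\sim_{\pi(\rho)}=\pi(\sim_\rho)=\Delta_{\frac{A}{\sim_\rho}}$. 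None of this appears in your proposal, so as written the proof assumes the key point rather than establishing it; supplying these relational-calculus arguments (or an equivalent element-free proof that $(\frac{A}{\sim_\rho},\pi(\rho))$ lies in $\Par$) is what is needed to close the gap.
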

\begin{proof}
Let us first prove that any $(f, \overline{f}) \colon (A, R) \rightarrow (B, \sigma)$ in $\mathsf{PreOrd}(\mathbb C)$
$$\xymatrix{  & R \ar@{.>}[dl] \ar@<.5ex>[d]^{r_2} \ar@<-.5ex>[d]_{r_1}  \ar[r]^{\overline{f}}  & \sigma \ar@<.5ex>[d]^{s_2} \ar@<-.5ex>[d]_{s_1}  \\
 Eq(f)  \ar@<.5ex>[r]^{} \ar@<-.5ex>[r]_{} &  A \ar[r]_{f} & {B} &
}
$$
where $(A, R) \in \Eq$ and $(B, \sigma) \in \Par$, is a morphism in $\N$.
For this one observes that $$f(R) = f(R \wedge R^o) \le f(R) \wedge f(R)^o \le \sigma \wedge (\sigma)^o = {\sim}_{\sigma} =\Delta_B,$$
where $R= R \wedge R^o$ since $R$ is symmetric, and the last equality follows from the fact that $(B, \sigma)$ is a partial order. 

Next, given any preorder $(A, \rho)$ on $A$, build the canonical quotient $\pi \colon A \rightarrow \frac{A}{\sim_\rho}$ (the coequalizer of $p_1$ and $p_2$), that exists since the category $\C$ is exact. In the diagram 
 $$
 \xymatrix{    &\rho  \ar@{->>}[r]^-{\overline{\pi}}  \ar@<.5ex>[d]^{r_2} \ar@<-.5ex>[d]_{r_1}  & \pi(\rho) \ar@<.5ex>[d]^{} \ar@<-.5ex>[d]_{} & \\
{\sim_{\rho}}  \ar@<.5ex>[r]^{p_1} \ar@{.>}[ur]^{i} \ar@<-.5ex>[r]_{p_2}  & A \ar@{->>}[r]_{\pi} & {\frac{A}{\sim_\rho}} &
}$$
the dotted arrow $i$ exists since $\sim_{\rho}\le \rho$ by definition of $\sim_{\rho}$, 
whereas $\pi(\rho)$ is the regular image of $\rho$ along $\pi$.
The induced relation $\pi (\rho)$ on ${\frac{A}{\sim_\rho}} $ is reflexive and transitive. Indeed, it is clear that $\pi (\rho)$ is reflexive and, by using the formula $\pi (\rho)= \pi \circ \rho \circ \pi^o$ giving the regular image of the relation $\rho$ along the regular epimorphism $\pi$, we see that $\pi(\rho)$ is transitive: 
$$(\pi \circ \rho \circ \pi^o) \circ (\pi \circ \rho \circ \pi^o) = \pi \circ \rho \circ (\pi^o \circ \pi) \circ \rho \circ \pi^o = 
\pi \circ \rho \, \circ \sim_{\rho} \circ \, \rho \circ \pi^o \le \pi \circ \rho \circ \pi^o.$$
 The formula for the inverse image along $\pi$ gives the inequality $$\pi^{-1} (\pi( \rho) )= \pi^o \circ \pi \circ \rho \circ \pi^o \circ \pi = {\sim}_{\rho} \circ \rho \, \circ  \sim_{\rho} \le \rho,$$
showing that $\rho = \pi^{-1} (\pi( \rho))$.
Diagrammatically, this means that the following square, yielding the regular image factorization, is a pullback:
\begin{equation}\label{direct-inverse}
\xymatrix{ {\rho} \ar@{>>}[r]^-{\overline{\pi}} \ar[d]_-{\langle r_1, r_2 \rangle} & \pi(\rho) \ar[d]^{} \\
A \times A  \ar@{>>}[r]_-{\pi \times \pi}  & \frac{A}{\sim_{\rho}} \times \frac{A}{\sim_{\rho}}.}
\end{equation}
We can now show that $\pi(\rho)$ is antisymmetric: indeed, in the cube 
$$
  \vcenter{\xymatrix@1@R=15pt@C=15pt{
    \sim_{\rho}   \ar@{..>}[rr]^-{\tilde{\pi}}\ar@<-2pt>[dd] \ar[dr] & &  \sim_{\pi(\rho)} \ar@<-2pt>[dd]\ar[dr]\\
    & \rho^o \ar@<-2pt>[dd] \ar@{>>}[rr]^(.5){\overline{\pi}} & & \pi(\rho^o) = \pi(\rho)^o \ar@<-2pt>[dd] \\
    \rho  \ar@{>>}[rr]_(.7){\overline \pi} \ar[dr] & & \pi(\rho)   \ar[dr]  \\
    & A \times A  \ar@{>>}[rr]_{\pi \times \pi } & &  \frac{A}{\sim_{\rho}} \times \frac{A}{\sim_{\rho}} }}
$$
the left-hand square and the bottom square are both pullbacks, and the commutativity of the diagram implies that the external rectangle here below is a pullback:
$$\xymatrix{
 \sim_{\rho}  \ar@{..>}[r]^-{\tilde{\pi} }  \ar[d] & \sim_{\pi(\rho)} \ar[r] \ar[d] & \pi (\rho) \ar[d] \\
 \rho^o \ar@{->>}[r]_-{\overline{\pi}} & \pi(\rho)^o \ar[r] & \frac{A}{\sim_{\rho}} \times \frac{A}{\sim_{\rho}}
}
$$
Since the right-hand square is a pullback, we deduce that the left-hand square is a pullback as well, and the induced arrow $\tilde{\pi}$ is then a regular epimorphism, since so is $\overline{\pi}$. It follows that $\sim_{\pi(\rho)} = {\pi} ( \sim_{\rho} ) = \Delta_{\frac{A}{\sim_{\rho}}} $, and the relation $\sim_{\pi(\rho)}$ is antisymmetric. 

For any preorder $(A, \rho)$, the canonical short $\N$-exact sequence is then given by the diagram 
\begin{equation}\label{Nsee}
 \xymatrix{ {\sim_{\rho}}  \ar[r]^i  \ar@<.5ex>[d]^{p_2} \ar@<-.5ex>[d]_{p_1}  &\rho  \ar@{->>}[r]^-{\overline{\pi}}  \ar@<.5ex>[d]^{r_2} \ar@<-.5ex>[d]_{r_1}  & \pi(\rho) \ar@<.5ex>[d]^{} \ar@<-.5ex>[d]_{} & \\
 A  \ar@{=}[r] & A \ar@{->>}[r]_{\pi} & {\frac{A}{\sim_\rho}} &
}\end{equation}
where $(A, {\sim_{\rho}})$ is an equivalence relation by Remark \ref{equivalence-relation}, and $({\frac{A}{\sim_\rho}}, \pi(\rho))$ is a partial order. To see that $(\pi, \overline{\pi})$ is the $\N$-cokernel of $(1_A, i)$, observe that by definition of $\pi$ the composite $(\pi, \overline{\pi})(1_A, i) $ belongs to $\N$. One then considers a morphism $(g, \overline{g}) \colon (A, \rho) \rightarrow (B, \sigma)$ in $\N$, so that 
$g p_1 = g p_2$. The universal property of the quotient $\pi$ yields a unique $\alpha$ such that $\alpha \pi = g$ and the result then follows (one checks that there is a unique $\overline{\alpha} \colon \pi(\rho) \rightarrow \sigma$ such that $ (\alpha, \overline{\alpha}) ( \pi, \overline{\pi}) = (g, \overline{g})$). 

Dually, we now show that $(1_A, i)$ is the $\N$-kernel of $(\pi, \overline{\pi})$.
To show that $(1_A, i)$ is the $\N$-kernel of $(\pi, \overline{\pi})$,
 consider then a morphism $(f, \overline{f}) \colon (B, \tau) \rightarrow (A, \rho)$ in $\mathsf{PreOrd}(\mathbb C)$ such that $(\pi, \overline{\pi}) (f, \overline{f}) \in \N$.
There is clearly only one possible arrow $f \colon B \rightarrow A$ such that $1_A f = f $, and it remains to show that there is a (unique) $\hat{f} \colon \tau \to {}\sim_{\rho}$ making the square 
$$\xymatrix{ \tau  \ar@{.>}[r]^{\hat{f}}  \ar@<.5ex>[d]^{t_2} \ar@<-.5ex>[d]_{t_1} &\sim_{\rho}  \ar@<.5ex>[d]^{p_2} \ar@<-.5ex>[d]_{p_1} \\
B \ar[r]_{f} & A 
} $$
commute.
The fact that $(\pi, \overline{\pi}) (f, \overline{f})$ factors through a discrete equivalence relation implies that $\pi f t_1 = \pi f t_2$. The equivalence relation $\sim_{\rho}$ is the kernel pair of its coequalizer ${\pi}$ (since $\C$ is an exact category), and its universal property gives the needed morphism $\hat{f}$.
\end{proof}

\begin{corollary}\label{adj-pre-par}
There is an adjunction
\begin{equation}\label{adjunction}
\xymatrix@=30pt{
{\mathsf{PreOrd}(\mathbb C) \, } \ar@<1ex>[r]_-{^{\perp}}^-{F} & {\, \Par ,\, }
\ar@<1ex>[l]^U  }
 \end{equation}
 where $U$ is the forgetful functor and $F$ is its left adjoint. Moreover, $\Par$ is a (regular epi)-reflective subcategory of ${\mathsf{PreOrd}(\mathbb C) \, }$.  \end{corollary}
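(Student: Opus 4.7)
The plan is to lift the $\mathcal{N}$-exact decomposition constructed in Lemma~\ref{N-torsion} to an adjunction. On objects, I define $F(A,\rho) := \bigl(\tfrac{A}{\sim_\rho},\, \pi(\rho)\bigr)$, which is an internal partial order in $\mathbb{C}$ by the proof of Lemma~\ref{N-torsion}. The morphism $(\pi,\overline{\pi})\colon (A,\rho) \to F(A,\rho)$ built in diagram~\eqref{Nsee} will serve as the candidate unit of the adjunction.

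For the universal property, let $(g,\overline{g})\colon (A,\rho) \to (B,\sigma)$ be an arbitrary morphism to an object of $\Par$. The key observation is that the composite $(g,\overline{g}) \circ (1_A, i)\colon (A,{\sim_\rho}) \to (B,\sigma)$ is a morphism from an internal equivalence relation (Remark~\ref{equivalence-relation}) to an internal partial order, and by the first paragraph of the proof of Lemma~\ref{N-torsion} any such morphism factors through $\mathsf{Dis}(\mathbb{C})$, hence lies in $\mathcal{N}$. I can therefore invoke the $\mathcal{N}$-cokernel property of $(\pi,\overline{\pi})$ already established in that lemma to obtain a unique $(\alpha,\overline{\alpha})\colon F(A,\rho) \to (B,\sigma)$ satisfying $(\alpha,\overline{\alpha})\circ(\pi,\overline{\pi}) = (g,\overline{g})$. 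This gives the required bijection between $\mathsf{PreOrd}(\mathbb{C})\bigl((A,\rho),U(B,\sigma)\bigr)$ and $\Par\bigl(F(A,\rho),(B,\sigma)\bigr)$; functoriality of $F$ and naturality of the unit then follow from the universal property in the usual way.

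For the (regular epi)-reflective claim, both $\pi$ (as the coequalizer of $p_1,p_2$) and $\overline{\pi}$ (as the regular image of $\langle r_1,r_2\rangle$ along $\pi\times\pi$) are regular epimorphisms in $\mathbb{C}$, and the square~\eqref{direct-inverse} exhibits $\pi(\rho)$ as the direct image of $\rho$. A morphism of internal preorders whose underlying map is a regular epimorphism in $\mathbb{C}$ and whose codomain relation coincides with the direct image of the domain relation is a regular epimorphism in $\mathsf{PreOrd}(\mathbb{C})$; this identification of regular epimorphisms in $\mathsf{PreOrd}(\mathbb{C})$ is the only mildly delicate point of the argument, everything else being a direct repackaging of the content of Lemma~\ref{N-torsion}.
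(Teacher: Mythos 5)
Your argument is correct and follows essentially the same route as the paper: the paper also obtains the reflection from the pretorsion theory of Lemma \ref{N-torsion} (via Corollary 3.4 of the pretorsion-theory paper, or, as you do, directly from the $\mathcal N$-cokernel property of $(\pi,\overline{\pi})$ combined with the fact that morphisms from $\mathsf{Eq}(\mathbb C)$ to $\mathsf{ParOrd}(\mathbb C)$ lie in $\mathcal N$), and it likewise takes $(\pi,\overline{\pi})$ as the unit and observes it is a (pullback-stable) regular epimorphism. The regular-epimorphism criterion you flag as the delicate point (underlying map a regular epi in $\mathbb C$ with the codomain relation the direct image) is true and is also used without proof in the paper, so no essential difference arises there.
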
 
 \begin{proof}
 We already know that $\Par $ is a reflective subcategory of ${\mathsf{PreOrd}(\mathbb C) \, }$, because it is the torsion-free subcategory in the pretorsion theory $(\Eq, \Par)$ in $\mathsf{PreOrd}(\mathbb C)$ (see Corollary $3.4$ in \cite{pretorsion}, although in this case it can be easily deduced from the arguments in Lemma \ref{N-torsion}). The morphism $(\pi, \overline{\pi})$ in the $\mathcal N$-exact sequence \eqref{Nsee} is the $(A,\rho)$-component of the unit of the adjunction \eqref{adjunction}, and this is a pullback-stable regular epimorphism in ${\mathsf{PreOrd}(\mathbb C)}$ (since the square \eqref{direct-inverse} is a pullback).
 \end{proof}

\begin{corollary}
Let $\C$ be an exact category, $A$ an object in $\C$. Then there is a bijection between:
\begin{enumerate}
\item preorder structures $(A, \rho)$ on $A$;
\item pairs $(R, \le)$ where $R$ is an equivalence relation on $A$, and $\le$ is a partial order on the quotient $\frac{A}{R}$:
\begin{equation}\label{bijection}
\xymatrix{    &  & \le \ar@<.5ex>[d]^{} \ar@<-.5ex>[d]_{} & \\
{R}  \ar@<.5ex>[r]^{} \ar@<-.5ex>[r]_{}  & A \ar[r]_{q} & {\frac{A}{R}.} &
}\end{equation}
\end{enumerate}
\end{corollary}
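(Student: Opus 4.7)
The plan is to exhibit mutually inverse maps between the two sets. From a preorder $(A,\rho)$, Lemma~\ref{N-torsion} already provides the required data: set $R := \sim_{\rho} = \rho \wedge \rho^o$, which by Remark~\ref{equivalence-relation} is an equivalence relation on $A$, and take the partial order $\pi(\rho)$ induced on the quotient $A/\sim_{\rho}$, where $\pi \colon A \to A/\sim_{\rho}$ is the coequalizer of the two projections of $\sim_{\rho}$. This yields the forward map $(A,\rho) \mapsto (\sim_{\rho},\pi(\rho))$.

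For the inverse map, given an equivalence relation $R$ on $A$ with quotient $q \colon A \to A/R$ and a partial order $\le$ on $A/R$, I would set $\rho := q^o \circ {\le} \circ q$, i.e.\ the pullback of the subobject ${\le} \hookrightarrow A/R \times A/R$ along $q \times q$. I would verify that $\rho$ is a preorder using relational calculus inside the exact category $\C$. Reflexivity follows from
\[
\Delta_A \le q^o \circ q = R = q^o \circ \Delta_{A/R} \circ q \le q^o \circ {\le} \circ q = \rho,
\]
since $R$ is the kernel pair of $q$ by exactness. For transitivity I would use the identity $q \circ q^o = \Delta_{A/R}$, valid for any regular epimorphism in a regular category:
\[
\rho \circ \rho = q^o \circ {\le} \circ (q \circ q^o) \circ {\le} \circ q = q^o \circ ({\le} \circ {\le}) \circ q \le q^o \circ {\le} \circ q = \rho.
\]

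It remains to check that the two round-trips are identities. Starting from $(A, \rho)$, cycling forward gives $(\sim_{\rho},\pi(\rho))$, and then the inverse map returns $\pi^o \circ \pi(\rho) \circ \pi = \rho$ directly from the pullback square~\eqref{direct-inverse} established in the proof of Lemma~\ref{N-torsion}. Starting from $(R,\le)$ and forming $\rho := q^o \circ {\le} \circ q$, I would compute
\[
\sim_{\rho} = \rho \wedge \rho^o = (q^o \circ {\le} \circ q) \wedge (q^o \circ {\le}^o \circ q) = q^o \circ ({\le} \wedge {\le}^o) \circ q = q^o \circ \Delta_{A/R} \circ q = R,
\]
where the third equality uses distributivity of pullback over intersection and the fourth uses antisymmetry of $\le$. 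This canonically identifies $A/\sim_{\rho}$ with $A/R$ (and $\pi$ with $q$), so that the partial order induced on the quotient is $\pi(\rho) = q \circ \rho \circ q^o = (q \circ q^o) \circ {\le} \circ (q \circ q^o) = {\le}$. The main obstacle is bookkeeping in the relational calculus, but once the identities $q \circ q^o = \Delta_{A/R}$ and $q^o \circ q = R$ (both consequences of $q$ being a regular epimorphism in an exact category) are in hand, the verification is essentially formal, the substantive work having already been carried out in Lemma~\ref{N-torsion}.
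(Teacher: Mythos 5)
Your proposal is correct and follows essentially the same route as the paper: the forward assignment $(A,\rho)\mapsto(\sim_{\rho},\pi(\rho))$ comes from Lemma \ref{N-torsion}, the inverse is the inverse image $q^{-1}(\le)=q^o\circ{\le}\circ q$, and the two round-trips are checked exactly as in the paper (via the pullback \eqref{direct-inverse} in one direction, and $\sim_{q^{-1}(\le)}=R$, $q(q^{-1}(\le))={\le}$ in the other). The only difference is that you spell out the verifications through the relational identities $q\circ q^o=\Delta_{A/R}$ and $q^o\circ q=R$, which the paper states more tersely.
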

\begin{proof}
We proved in Lemma \ref{N-torsion} that, given a preorder $(A, \rho)$,  there is an equivalence relation $\sim_{\rho}$ on $A$ with the property that 
 the quotient $(\frac{A}{\sim_{\rho}}, \pi(\rho))$ is a partial order. The required pair $(R, \le)$ in $(b)$ is then the pair $(\sim_{\rho}, \pi(\rho))$.
 
Conversely, given a pair $(R, \le)$ as in $(b)$ and the corresponding diagram \eqref{bijection}, one can complete it by taking the inverse image $q^{-1} (\le)$ of $\le$ along $q$:
\begin{equation}\label{bijection2}
\xymatrix{    & q^{-1}(\le) \ar@{.>>}[r]^-{\overline{q}}  \ar@{.>}@<.5ex>[d]^{} \ar@{.>}@<-.5ex>[d]_{}& \le \ar@<.5ex>[d]^{} \ar@<-.5ex>[d]_{} & \\
{R}  \ar@<.5ex>[r]^{} \ar@<-.5ex>[r]_{}  & A \ar@{->>}[r]_{q} & {\frac{A}{R}.} &
}
\end{equation}
The relation $(A, q^{-1}(\le))$ is reflexive and transitive, because so is $(\frac{A}{R}, \le)$. 

On the one hand, if we apply the reflector $F$ of \eqref{adjunction} to $(A, q^{-1}(\le))$, we get the pair $(R, \le)$ again (up to isomorphism): indeed, the assumption that $(\frac{A}{R}, \le)$ is a partial order implies that
$$\sim_{q^{-1}(\le)}{} = q^{-1}(\le) \wedge q^{-1}(\le)^o = q^{-1}(\le) \wedge q^{-1}(\le^o) = q^{-1} (\le \wedge \le^o) = q^{-1} (\Delta_{\frac{A}{R}})= R,$$
and one clearly has that $q(q^{-1}(\le) ) = \le$ (see diagram \ref{bijection2}).

On the other hand, if we begin with a preorder $(A, \rho)$, the fact that the diagram \eqref{direct-inverse} is a pullback implies that $\rho = \pi^{-1} (\pi (\rho))$.
This shows that the correspondence is a bijection. 
\end{proof}
Let $\mathbb A$ be a reflective subcategory of a category $\mathbb B$ 
\begin{equation}\label{semi-loc}
\xymatrix@=30pt{
{\mathbb B \, } \ar@<1ex>[r]_-{^{\perp}}^-{G} & {\, {\mathbb A}\, }
\ar@<1ex>[l]^V  }
 \end{equation}
 with $V$ the inclusion functor and $G$ its left adjoint.
 The reflector $G \colon \mathbb B \rightarrow \mathbb A$ has \emph{stable units} \cite{CHK} if it preserves any pullback of the form
$$
\xymatrix{P \ar[r]^-{p_2} \ar[d]_{p_1} & X \ar[d]^{\phi}  \\
A \ar[r]_-{{\overline{\eta}}_A} & G(A),
}
$$
where $\overline{\eta}_A$ is the $A$-component of the unit ${\overline{\eta}}$ of the adjunction \eqref{semi-loc}.
This property can be seen to be equivalent to the property that the components of the unit are pullback-stable, in the sense that, for any square as above, one has an isomorphism $p_2 \cong \eta_P$. When this is the case, it follows in particular that $\mathbb A$ is a semi-localization of $\mathbb B$ \cite{Semi-loc}: indeed, by definition, for a reflector $G \colon \mathbb B \rightarrow \mathbb A$ having stable units is a stronger condition than being \emph{semi-left-exact} in the sense of \cite{CHK}.
The following well known lemma will be needed:
\begin{lemma}\label{pullback-mono}
Consider the following morphism of equivalence relations 
$$
\xymatrix{R \ar[r]^{\overline{f}} \ar@<-2pt>[d]_{r_1}  \ar@<2pt>[d]^{r_2}& S  \ar@<-2pt>[d]_{s_1}  \ar@<2pt>[d]^{s_2}\\
X \ar[r]_f  & Y}
$$
in an exact category $\mathbb C$. Then $f^{-1} (S) = R$ if and only if the unique induced arrow $\phi \colon X/R \rightarrow Y/S $ making the square 
$$
\xymatrix{X \ar[r]^f \ar@{->>}[d]_{q_R} & Y \ar@{->>}[d]^{q_S}  \\
X/R \ar@{.>}[r]_{\phi} & Y/S}
$$
commute is a monomorphism.
\end{lemma}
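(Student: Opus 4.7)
The plan is to analyze the kernel pair of the common composite $q_S f = \phi q_R$ along both of its available routes and to compare the resulting subobjects of $X \times X$, exploiting the exactness of $\mathbb C$.

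First I will use that $R$ and $S$ are effective equivalence relations, so $R = Eq(q_R)$ and $S = Eq(q_S)$. Combining this with the standard identity $Eq(kh) = (h \times h)^{-1}(Eq(k))$ for composable morphisms $h,k$, one computes
\begin{align*}
Eq(q_S f) &= (f \times f)^{-1}(Eq(q_S)) = f^{-1}(S),\\
Eq(\phi q_R) &= (q_R \times q_R)^{-1}(Eq(\phi)).
\end{align*}
Since $q_S f = \phi q_R$, these two subobjects of $X \times X$ coincide, yielding the key identity
$$
f^{-1}(S) \;=\; (q_R \times q_R)^{-1}(Eq(\phi)). \qquad (\star)
$$

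For the ``if'' direction, if $\phi$ is a monomorphism then $Eq(\phi) = \Delta_{X/R}$, and $(\star)$ gives
$f^{-1}(S) = (q_R \times q_R)^{-1}(\Delta_{X/R}) = Eq(q_R) = R$. For the converse, assume $f^{-1}(S) = R$; then $(\star)$ becomes $(q_R \times q_R)^{-1}(Eq(\phi)) = R = (q_R \times q_R)^{-1}(\Delta_{X/R})$. Because $q_R$ is a regular epimorphism and regular epimorphisms are pullback-stable in an exact category, the product $q_R \times q_R$ is also a regular epimorphism, and one concludes $Eq(\phi) = \Delta_{X/R}$, so that $\phi$ is a monomorphism.

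The only conceptually delicate point is this last inference: taking the inverse image along a regular epimorphism reflects equality of subobjects. This holds because for a regular epi $e$ and any subobject $M$ of its codomain one has $e(e^{-1}(M)) = M$ (using pullback-stability of regular epis together with the regular image factorization available in any exact category), so $e^{-1}$ is injective on subobjects. Once this is noted, the whole argument is a formal chase through the kernel-pair identities above.
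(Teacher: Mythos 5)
Your proof is correct, and it is organized differently from the paper's. The paper splits the two implications and uses different tools for each: for ``$f^{-1}(S)=R \Rightarrow \phi$ monic'' it observes that $R$ is then the kernel pair of $q_S f$, so that $\phi q_R$ is precisely the regular epi--mono factorization of $q_S f$, whence $\phi$ is a monomorphism; for the converse it runs a universal-property chase, showing that when $\phi$ is monic any equivalence relation $(T,t_1,t_2)$ on $X$ mapping to $S$ over $f$ satisfies $q_R t_1 = q_R t_2$ and hence factors through $R = Eq(q_R)$, which identifies $R$ with $f^{-1}(S)$. You instead derive everything from the single identity $f^{-1}(S) = (q_R\times q_R)^{-1}(Eq(\phi))$, obtained from $Eq(kh)=(h\times h)^{-1}(Eq(k))$ together with effectiveness ($R=Eq(q_R)$, $S=Eq(q_S)$), and then use, for the harder direction, that $e^{-1}(-)$ is injective on subobjects when $e$ is a regular epimorphism (via $e(e^{-1}(M))=M$, valid in any regular category, applied to $e=q_R\times q_R$, which is a regular epi by pullback-stability and closure under composition). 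Both arguments invoke exactness at the same point, namely that the given equivalence relations are the kernel pairs of their quotient maps; what your route buys is a symmetric, purely subobject-lattice computation treating both implications at once, at the cost of establishing the auxiliary facts about $q_R\times q_R$ and reflection of subobjects, which the paper's more hands-on use of the factorization system and of the universal property of kernel pairs avoids.
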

\begin{proof}
When $f^{-1} (S) = R$, the equivalence relation $(R, r_1, r_2)$ is the kernel pair of $q_S f$. In this case $\phi q_R$ is simply the usual factorization of $q_S f$ as a regular epimorphism followed by a monomorphism. Conversely, assume that $\phi$ is a monomorphism. If $(T, t_1, t_2)$ is another equivalence relation on $X$ and $(f, \hat{f}) \colon (T, t_1, t_2) \rightarrow (S, s_1, s_2)$ a morphism of equivalence relations, one sees that $q_R  t_1 = q_R t_2$ (since $\phi q_R  t_1 = \phi q_R t_2$). The universal property of the kernel pair $(R, r_1, r_2)$ of $q_R$ implies that there is a unique $\alpha \colon T \rightarrow R$ in $\mathbb C$ such that $ (f,\overline{f}) (1_X,\alpha)= (f,  \hat{f})$, as desired.
\end{proof}

\begin{theorem}\label{semi-localization}
Given any exact category $\mathbb C$, the reflector $F \colon {\mathsf{PreOrd}(\mathbb C) \, } \rightarrow \Par$ in the adjunction \eqref{adjunction}
has stable units. That is, the functor $F$ preserves any pullback in ${\mathsf{PreOrd}(\mathbb C)}$ of the form
 \begin{equation}\label{cube}\xymatrix@1@R=30pt@C=30pt{{\rho \times_{\pi_X(\rho)} \sigma} \ar@{->>}[rr]^{\overline{p}_2}
 \ar[dd]^{\overline{p}_1}\ar@<-2pt>[dr]_{l_1} \ar@<2pt>[dr]^{l_2}&& \sigma \ar[dd]^(.7){\overline{f}}
 \ar@<-2pt>[dr]_{s_1} \ar@<2pt>[dr]^{s_2}
 \\& {\quad X \times_Y Z}\ar@{->>}[rr]_(.7){p_2}
\ar[dd]^(.7){p_1}&&Z \ar[dd]^{f}\\ \rho  \ar@<-2pt>[dr]_{r_1} \ar@<2pt>[dr]^{r_2}  \ar@{->>}[rr]^(.7){\overline{\pi}_X} &&\pi_X(\rho)  \ar@<-2pt>[dr]_{t_1} \ar@<2pt>[dr]^{t_2}
 \\&X\ar@{->>}[rr]_{\pi_X}&&{\quad \frac{X}{\sim_{\rho}}} = Y}
 \end{equation}
 where the bottom square is the $(X, \rho)$-component of the unit of the adjunction \eqref{adjunction}. \end{theorem}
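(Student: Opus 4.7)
The plan is to show that the morphism $(p_2, \overline{p}_2) \colon (X\times_Y Z, \tau) \to (Z, \sigma)$ on top of the cube (where we abbreviate $\tau := \rho \times_{\pi_X(\rho)} \sigma$) is inverted by $F$. Since the bottom morphism $(\pi_X, \overline{\pi}_X)$ is the $(X,\rho)$-component of the unit and is therefore also inverted by $F$, the $F$-image of the whole square will exhibit isomorphisms on three of its sides, making it trivially a pullback in $\Par$.

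The key step is the identity ${\sim_\tau} = p_2^{-1}({\sim_\sigma})$ of equivalence relations on the underlying object $P := X\times_Y Z$. Since pullbacks in $\mathsf{PreOrd}(\mathbb{C})$ are computed componentwise, the underlying relation of $\tau$ coincides with the intersection $p_1^{-1}(\rho) \wedge p_2^{-1}(\sigma)$ of subobjects of $P\times P$; taking symmetric parts yields
\[ {\sim_\tau} = p_1^{-1}({\sim_\rho}) \wedge p_2^{-1}({\sim_\sigma}). \]
Because $f\colon (Z,\sigma)\to (Y,\pi_X(\rho))$ is a morphism from a preorder to a partial order, the first part of the proof of Lemma \ref{N-torsion}, applied to the equivalence relation $\sim_\sigma$, gives $f({\sim_\sigma}) \le \Delta_Y$, equivalently ${\sim_\sigma} \le Eq(f)$. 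Combining this with $fp_2 = \pi_X p_1$ and $Eq(\pi_X) = {\sim_\rho}$ then yields
\[ p_2^{-1}({\sim_\sigma}) \le p_2^{-1}(Eq(f)) = Eq(fp_2) = Eq(\pi_X p_1) = p_1^{-1}({\sim_\rho}), \]
so the intersection above collapses to ${\sim_\tau} = p_2^{-1}({\sim_\sigma})$.

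By Lemma \ref{pullback-mono}, the induced map $\phi\colon P/{\sim_\tau} \to Z/{\sim_\sigma}$ on underlying objects is a monomorphism. Moreover, since $\pi_X$ is a pullback-stable regular epimorphism in $\mathbb C$, its pullback $p_2$ is a regular epimorphism; hence $\phi \circ \pi_P = \pi_Z \circ p_2$ is a regular epimorphism, and so is $\phi$. In the exact category $\mathbb C$, a morphism that is both a monomorphism and a regular epimorphism is an isomorphism, and thus $\phi$ is an isomorphism. To promote $\phi$ to an isomorphism in $\Par$ it remains to verify $(\phi\times\phi)(\pi_P(\tau)) = \pi_Z(\sigma)$: this follows because $\overline{p}_2 \colon \tau \to \sigma$ is the pullback of the pullback-stable regular epimorphism $\overline{\pi}_X \colon \rho \to \pi_X(\rho)$ along $\overline{f}$ (Corollary \ref{adj-pre-par}), hence is itself a regular epimorphism in $\mathbb C$; therefore the regular image of $\tau$ along $p_2 \times p_2$ equals $\sigma$, and applying $\pi_Z \times \pi_Z$ gives $\pi_Z(\sigma)$, which via $\phi\pi_P = \pi_Z p_2$ coincides with $(\phi\times\phi)(\pi_P(\tau))$. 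The hard part is the identity ${\sim_\tau} = p_2^{-1}({\sim_\sigma})$, which is the categorical core of the stable-units property in this context; once it is established, everything else follows from exactness of $\mathbb C$ together with Lemma \ref{pullback-mono} and Corollary \ref{adj-pre-par}.
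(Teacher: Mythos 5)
Your proof is correct, and its overall architecture coincides with the paper's: everything hinges on the identity $\sim_{\tau}\; = p_2^{-1}(\sim_{\sigma})$, after which Lemma \ref{pullback-mono} gives that $F(p_2)$ is a monomorphism, pullback-stability of regular epimorphisms gives that it is a regular epimorphism, hence an isomorphism, and a separate image computation upgrades this to an isomorphism in $\Par$ (the paper does the same via the monomorphism $F(\overline{p}_2)$). Where you genuinely diverge is in how the key identity is obtained: the paper pastes pullback squares to show $p_2^{-1}(\sigma)=\rho\times_{\pi_X(\rho)}\sigma$ and then intersects with opposites, whereas you use the componentwise description $\tau=p_1^{-1}(\rho)\wedge p_2^{-1}(\sigma)$ together with the observation (borrowed from the first half of the proof of Lemma \ref{N-torsion}) that $\sim_{\sigma}\,\le Eq(f)$ because $f$ lands in a partial order, so that $p_2^{-1}(\sim_{\sigma})\le p_1^{-1}(\sim_{\rho})$ and the intersection collapses. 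This lattice-theoretic absorption argument is a pleasant alternative, and it makes visible exactly where the hypothesis that the codomain of the unit is a partial order enters; the paper's pasting argument, by contrast, yields the stronger statement that $(p_2,\overline{p}_2)$ is fully faithful, which is reused later in Proposition \ref{factor-E}. One cosmetic quibble: after inverting $(p_2,\overline{p}_2)$ and $(\pi_X,\overline{\pi}_X)$ only the two horizontal sides of the image square are isomorphisms, not three, but two opposite invertible sides already force the square to be a pullback, so the conclusion stands.
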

\begin{proof}
First note that $p_2$ and $\overline{p}_2$ in the cube \eqref{cube} are regular epimorphisms since both $\pi_X$ and $\overline{\pi}_X$ are regular epimorphisms.
The image of the cube \eqref{cube} by the reflector $F$ is the commutative diagram
 \begin{equation}\label{cube2}\xymatrix@1@R=30pt@C=30pt{\pi_{X \times_Y Z}({\rho \times_{\pi_X(\rho)} \sigma)} \ar@{->>}[rr]^{F(\overline{p}_2)}
 \ar[dd]^{F(\overline{p}_1)}\ar@<-2pt>[dr]_{m_1} \ar@<2pt>[dr]^{m_2}&& \pi_Z(\sigma) \ar[dd]^(.7){}
 \ar@<-2pt>[dr]_{s_1'} \ar@<2pt>[dr]^{s_2'}
 \\& {\quad \frac{X \times_Y Z}{\sim} }\ar@{->>}[rr]_(.8){F(p_2)}
\ar[dd]^(.7){F(p_1)}&&{\, \frac{Z}{\sim_{\sigma}}} \ar[dd]^{F(f)}\\ \pi_X(\rho)  \ar@<-2pt>[dr]_{t_1} \ar@<2pt>[dr]^{t_2}  \ar@{=}[rr]^(.7){1_{\pi_X(\rho)}} &&{\pi_X}(\rho)  \ar@<-2pt>[dr]_{t_1} \ar@<2pt>[dr]^{t_2}
 \\&{\, Y} \ar@{=}[rr]_{1_Y}&&{\,\,  Y} }
 \end{equation}
where we write $\sim$ instead of $ \sim_{{\rho \times_{\pi_X(\rho)} \sigma}}$ to simplify the notations. Note that $F(p_2)$ and $F(\overline{p}_2)$ are both regular epimorphisms, since so are $p_2$ and $\overline{p}_2$. Accordingly, to prove the result, it will suffice to show that $F(p_2)$ and $F(\overline{p}_2)$ are monomorphisms (and then isomorphisms), since this will imply that both the back and the front faces in the cube \eqref{cube2} are pullbacks.
For this, consider the commutative diagram
$$\xymatrix@=40pt{ {\rho \times_{\pi(\rho)} \sigma} \ar[d]_{\langle l_1,l_2\rangle}   \ar[r]^-{\overline{p}_2 }  & \sigma 
\ar[d]^{\langle s_1, s_2\rangle}  \\
 (X \times_Y Z) \times  (X \times_Y Z) \ar[r]_-{p_2 \times p_2} & {Z \times Z,} &
}
$$
and observe that it is a pullback, i.e., that ${p_2}^{-1} (\sigma) =  {\rho \times_{\pi(\rho)} \sigma}$, since in the following diagram both the external rectangle and the right-hand square are pullbacks:
$$\xymatrix@=40pt{ {\rho \times_{\pi_X(\rho)} \sigma} \ar[r]^-{\langle l_1,l_2\rangle}   \ar[d]_-{\overline{p}_2 }  & (X \times_Y Z) \times  (X \times_Y Z) \ar[d]^{p_2 \times p_2 }  \ar[r]^-{p_1 \times p_1 } & X\times X \ar[d]^{{\pi}_X \times {\pi}_X } \\
 \sigma 
  \ar[r]_-{\langle s_1,s_2\rangle} & {Z \times Z}  \ar[r]_{f \times f} & Y\times Y.
}
$$
This implies that 
$$\xymatrix@=40pt{ {(\rho \times_{\pi_X(\rho)} \sigma)}^o \ar[d]_{\langle l_2,l_1\rangle}   \ar[r]^-{\overline{p}_2 }  & \sigma^o 
\ar[d]^{\langle s_2, s_1\rangle}  \\
 (X \times_Y Z) \times  (X \times_Y Z) \ar[r]_-{p_2 \times p_2} & {Z \times Z,} &
}
$$
is a also pullback and, consequently,  the commutative square  
$$\xymatrix@=40pt{ \sim \ar[d]_{}   \ar[r]^-{ }  & \sim_{\sigma} 
\ar[d]^{}  \\
 (X \times_Y Z) \times  (X \times_Y Z) \ar[r]_-{p_2 \times p_2} & {Z \times Z,} &
}
$$
is a pullback (since $\sim {} = (\rho \times_{\pi_X(\rho) \sigma)} \wedge  {(\rho \times_{\pi_X(\rho)} \sigma)}^o$ and $\sim_{\sigma} {}= \sigma \wedge \sigma^o$). This means that $p_2^{-1}( \sim_{\sigma}) ={} {\sim}$ and, by Lemma \ref{pullback-mono}, it follows that $F(p_2) \colon 
\frac{X \times_Y Z}{\sim} \rightarrow \frac{Z}{\sim_{\sigma}}$ is a monomorphism. It follows that the arrow $\langle s_1', s_2' \rangle F(\overline{p}_2) =F(p_2)\times F(p_2) (m_1,m_2)$ is a monomorphism, thus $F(\overline{p}_2)$ is a monomorphism, hence an isomorphism, as desired.
\end{proof}
We will write $\mathsf{Cat}(\mathbb C)$ for the category of internal categories and internal functors in $\mathbb C$.
This category contains the category ${\mathsf{PreOrd}(\mathbb C)}$ as a full subcategory. Xarez proved in \cite{Xarez} (Corollary 5.2) that the reflector 
\begin{equation*}
\xymatrix@=30pt{
{\mathsf{Cat}(\mathbb C) \, } \ar@<1ex>[r]_-{^{\perp}}^-{G} & {\mathsf{PreOrd}(\mathbb C)}
\ar@<1ex>[l]^-V }
 \end{equation*}
has stable units, under some suitable assumptions on $\mathbb C$ that are certainly satisfied when $\mathbb C$ is an exact category. Accordingly, we have the following
\begin{corollary}
When $\mathbb C$ is an exact category, the reflector $L \colon \mathsf{Cat}(\mathbb C) \rightarrow \Par$ in the adjunction
\begin{equation}\label{Cat}
\xymatrix@=30pt{
{\mathsf{Cat}(\mathbb C) \, } \ar@<1ex>[r]_-{^{\perp}}^-{L} & \Par
\ar@<1ex>[l]^-W  }
 \end{equation}
 has stable units.
\end{corollary}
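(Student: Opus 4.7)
The plan is to recognize $L$ as the composite $F \circ G$ of two reflectors each known to have stable units, and then to apply closure of this property under composition. Since every internal partial order is in particular an internal preorder, and every internal preorder is in particular an internal category, the inclusion $W \colon \Par \hookrightarrow \mathsf{Cat}(\mathbb C)$ factors through $\mathsf{PreOrd}(\mathbb C)$. By uniqueness of left adjoints up to natural isomorphism, $L \cong F \circ G$, where $F$ is the reflector of Theorem \ref{semi-localization} and $G$ is the Xarez reflector $\mathsf{Cat}(\mathbb C) \to \mathsf{PreOrd}(\mathbb C)$, whose stable-units property under the present exactness hypothesis on $\mathbb C$ is exactly Corollary 5.2 of \cite{Xarez}.

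To finish, I would use the standard equivalent reformulation (cf.\ \cite{CHK}) that a reflector $R \colon \mathbb B \to \mathbb A$ has stable units if and only if it preserves every pullback $X \times_Z Y$ in $\mathbb B$ whose base $Z$ lies in the reflective subcategory $\mathbb A$. From this formulation, closure under composition is immediate: given a cospan $X \to Z \leftarrow Y$ in $\mathsf{Cat}(\mathbb C)$ with $Z \in \Par \subseteq \mathsf{PreOrd}(\mathbb C)$, stable units of $G$ yield an isomorphism $G(X \times_Z Y) \cong G(X) \times_Z G(Y)$, a pullback in $\mathsf{PreOrd}(\mathbb C)$ whose base $Z$ lies in $\Par$; stable units of $F$ then preserve this latter pullback, and so $L = FG$ preserves $X \times_Z Y$. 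Since $Z \in \Par$ was arbitrary, $L$ has stable units.

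The only subtle point is the equivalent characterization of stable units invoked above. Its proof is classical: one factors an arbitrary pullback over $Z \in \mathbb A$ as the pasting of two stacked pullbacks via the unit $\eta_Y \colon Y \to RY$, applies the definition of stable units to the upper square, and uses closure of $\mathbb A$ under limits (inherited from full reflectivity) for the lower one. I do not expect this to present any real obstacle.
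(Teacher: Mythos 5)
Your proposal follows the paper's own route: decompose the adjunction through $\mathsf{PreOrd}(\mathbb C)$ so that $L=FG$, invoke Corollary 5.2 of \cite{Xarez} for $G$ and Theorem \ref{semi-localization} for $F$, and conclude since stable units are preserved by composition of reflectors --- a fact the paper asserts without proof and you justify via the standard characterization that a reflector has stable units iff it preserves all pullbacks whose base lies in the reflective subcategory. The only caveat is in your one-line sketch of that classical characterization: after pulling back along $\eta_Y$ the remaining square is a pullback of the arbitrary map $X\to Z$ along $R(Y)\to Z$, so a second application of stable units (factoring the other leg through $\eta_X$) is needed before closure of $\mathbb A$ under limits finishes the argument; this is harmless, as the lemma you invoke is indeed classical and correct.
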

\begin{proof}
The adjunction \eqref{Cat} can be decomposed as the composite adjunction
\begin{equation*}  
\xymatrix@=30pt{
{\mathsf{Cat}(\mathbb C) \, } \ar@<1ex>[r]_-{^{\perp}}^-{G} & {\mathsf{PreOrd}(\mathbb C)} \ar@<1ex>[l]^-V \ar@<1ex>[r]_-{^{\perp}}^-{F} &  {\Par ,}  \ar@<1ex>[l]^-U  }
 \end{equation*}
 where $V U = W$ and $L = FG$.
 Since the composite of two reflectors with stable units is itself a reflector with stable units, the result then follows from Corollary $5.2$ in \cite{Xarez} and from Theorem \ref{semi-localization} above.
\end{proof}
Since the reflector $F \colon {\mathsf{PreOrd}(\mathbb C)} \rightarrow \Par$ in \eqref{adjunction} has stable units, and is therefore semi-left-exact, the class $\mathcal E$ of morphisms inverted by the reflector $F$ in \eqref{adjunction} and the class $\mathcal M$, that is the closure under pullbacks of morphisms lying in $\Par$, determine a (reflective) factorization system $(\mathcal E, \mathcal M)$ in $\mathsf{PreOrd}(\mathbb C)$. This follows from the general theory of factorization systems \cite{CHK}. Thanks to the fact that the reflector $F$ is semi-left-exact, we know that the class $\mathcal E$ is stable under pullbacks along morphisms in $\mathcal M$, and this latter class admits a simple description. Indeed, the canonical $(\mathcal E, \mathcal M)$-factorization $m e$ of a morphism $f \colon (A, \rho) \rightarrow (B, \sigma)$ in  $\mathsf{PreOrd}(\mathbb C)$ can be easily obtained via the following pullback: 
$$\xymatrix@=40pt{A \ar[rrd]^f \ar[ddr]_{\pi_A} \ar@{.>}[dr]^{e \in \mathcal E} & & \\& F(A)\times_{F(B)} B \ar[d] \ar[r]_-{m \in \mathcal M} & B \ar[d]^{\pi_B} \\& F(A) \ar[r]_{F(f)} & F(B).
}
$$
The two classes $(\mathcal E, \mathcal M)$ in $\mathsf{PreOrd}(\mathbb C)$ can be explicitly described as follows:
\begin{proposition}\label{factor-E}
Given the adjunction \eqref{adjunction}, consider the associated factorization system $(\mathcal E, \mathcal M)$. Then
\begin{enumerate}
\item a morphism \begin{equation}\label{generic}
\xymatrix{   \rho  
\ar@<.5ex>[d]^{r_2} \ar@<-.5ex>[d]_{r_1}  \ar[r]^{\overline{f}}  & \sigma \ar@<.5ex>[d]^{s_2} \ar@<-.5ex>[d]_{s_1}  \\
   A \ar[r]_{f} & {B} &
}
\end{equation}
in $\mathsf{PreOrd}(\mathbb C)$ is in the class $\mathcal E$ if and only if  it is fully faithful, i.e. the square 
 \begin{equation}\label{pullback-fully-faithful} 
\xymatrix{\, {\rho}\ar[d]  \ar[r]^{\overline{f}} & {\, \, {\sigma}} \ar[d]
\\A \times A \ar[r]_{f \times f} & B \times B
}
\end{equation}
is a pullback and, moreover, the induced arrow $\frac{A}{\sim_{\rho}} \rightarrow \frac{B}{\sim_{\sigma}} $ is a regular epimorphism.
\item a morphism \eqref{generic} is in the class $\mathcal M$ if and only if both the commutative squares
$$\xymatrix{   \sim_{\rho}  
\ar@<.5ex>[d]^{} \ar@<-.5ex>[d]_{}  \ar[r]^{\hat{f}}  & \sim_{\sigma} \ar@<.5ex>[d]^{} \ar@<-.5ex>[d]_{}  \\
   A \ar[r]_{f} & {B} &
}$$
are pullbacks. This means that the internal functor $(f, \hat{f}) \colon (A, \sim_{\rho}) \rightarrow (B, \sim_{\sigma})$ is a \emph{discrete fibration}.
\end{enumerate}
\end{proposition}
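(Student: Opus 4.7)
The plan is to characterise both classes through a single unifying criterion: $(f,\overline{f})$ lies in $\mathcal{E}$ exactly when $F(f,\overline{f})=(\phi,\overline{\phi})$ is an isomorphism in $\Par$, and lies in $\mathcal{M}$ exactly when the naturality square of the unit at $(f,\overline{f})$ is a pullback in $\mathsf{PreOrd}(\mathbb{C})$ (the standard characterisation of $\mathcal{M}$ for the reflective factorisation system arising from a semi-left-exact reflection, which applies here by Theorem~\ref{semi-localization}). Throughout, $\phi\colon A/{\sim_\rho}\to B/{\sim_\sigma}$ denotes the unique morphism with $\phi\pi_A=\pi_B f$, and $\overline{\phi}\colon\pi(\rho)\to\pi(\sigma)$ the induced map between the regular images.

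For part~(a), since $\mathbb{C}$ is exact, $\phi$ is iso iff it is both a regular epimorphism and a monomorphism. Assuming (a) and (b), Lemma~\ref{pullback-mono} reduces the mono condition on $\phi$ to ${\sim_\rho}=(f\times f)^{-1}({\sim_\sigma})$, which follows from (a) by applying $(-)\wedge(-)^o$ to $\rho=(f\times f)^{-1}(\sigma)$: inverse image (being a right adjoint) commutes with binary meets, and clearly with reversal of relations. Combined with (b) this gives $\phi$ iso. For $\overline{\phi}$ iso, I would combine (a) with the pullback~\eqref{direct-inverse} applied to both $(A,\rho)$ and $(B,\sigma)$, plus $\phi\pi_A=\pi_B f$, to obtain
\[ \rho \;=\; (f\times f)^{-1}(\sigma) \;=\; (f\times f)^{-1}(\pi_B\times\pi_B)^{-1}(\pi(\sigma)) \;=\; (\pi_A\times\pi_A)^{-1}(\phi\times\phi)^{-1}(\pi(\sigma)), \]
and compare this with $\rho=(\pi_A\times\pi_A)^{-1}(\pi(\rho))$. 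Since $\pi_A\times\pi_A$ is a regular epimorphism and pulling back along a regular epi reflects equalities of subobjects in a regular category, we conclude $\pi(\rho)=(\phi\times\phi)^{-1}(\pi(\sigma))$, whence $\overline{\phi}$ is iso. Conversely, the same chain of identities in reverse recovers (a) from $F(f,\overline{f})$ iso, while (b) is immediate.

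For part~(b), pullbacks in $\mathsf{PreOrd}(\mathbb{C})$ are computed coordinate-wise, so the unit naturality square is a pullback iff both the underlying square in $\mathbb{C}$ and the induced square on relations are pullbacks. The relation component is automatic: the pullback relation on $F(A)\times_{F(B)}B$ computes as $(\pi_A\times\pi_A)^{-1}(\pi(\rho))\wedge(f\times f)^{-1}(\sigma)=\rho\wedge(f\times f)^{-1}(\sigma)=\rho$, where the last equality uses $\rho\leq(f\times f)^{-1}(\sigma)$ (the preorder-morphism condition). So the substantive content of (b) is the equivalence between the underlying naturality square being a pullback and $(f,\hat{f})\colon(A,{\sim_\rho})\to(B,{\sim_\sigma})$ being a discrete fibration; this is where the main obstacle lies. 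My approach is to decompose the canonical comparison $g\colon A\to B\times_{B/{\sim_\sigma}}A/{\sim_\rho}$ induced by $(f,\pi_A)$ into its mono and regular-epi parts: a direct computation gives that the kernel pair of $g$ is $\mathrm{Eq}(f)\wedge{\sim_\rho}$, which equals $\Delta_A$ precisely when the uniqueness part of the discrete-fibration property holds, while $g$ being a regular epi corresponds, via a Barr--Kock style argument on kernel pairs, to the existence/lifting part. Since in an exact category $g$ is iso iff it is both mono and regular epi, this yields the equivalence with the pullback square on $(p_2,s_2)$ — equivalently, by symmetry of equivalence relations, with the $(p_1,s_1)$ square.
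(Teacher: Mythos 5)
Your proposal is correct and follows essentially the same route as the paper: membership in $\mathcal E$ is tested by whether $F$ inverts the morphism (using Lemma \ref{pullback-mono} for the monomorphism condition and cancellation of pullbacks along the regular epimorphism $\pi_A\times\pi_A$ for the full-faithfulness condition), and membership in $\mathcal M$ is reduced to the unit naturality square being a pullback, whose relation component is automatic. The one step you leave as a sketch --- that the object square \eqref{lower} is a pullback if and only if $(f,\hat f)$ is a discrete fibration, via a Barr--Kock argument on kernel pairs --- is precisely the step the paper also dispatches as ``easily seen'', so no genuine gap separates the two arguments.
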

\begin{proof} 
$(a)$ First observe that the assumption that the square \eqref{pullback-fully-faithful} is a pullback implies that the following square is also a pullback:
\begin{equation}\label{sigma-fully-faithful} 
\xymatrix{\, \sim_{\rho}\ar@{>->}[d]  \ar[r]^{\hat{f}} & {\, \, \sim_{\sigma}} \ar@{>->}[d]
\\A \times A \ar[r]_{f \times f} & B \times B
}
\end{equation}
Indeed, this easily follows from the construction of $\sim_{\rho}$ (and $\sim_{\sigma}$) as pullbacks (see \eqref{sim}).
Then, by applying Lemma \ref{pullback-mono} to the commutative diagram
\begin{equation}\label{lower}
\xymatrix@=35pt{A \ar[d]_{\pi_A}   \ar[r]^{f} & B \ar[d]^{\pi_B}
\\ \frac{A}{{\sim}_{\rho}} \ar[r]_{F(f)} & \frac{B}{{\sim}_{\sigma}}
}
\end{equation}
we see that $F(f)$ is a monomorphism if and only if the square \eqref{sigma-fully-faithful} is a pullback. It follows that $F(f)$ is an isomorphism if and only \eqref{sigma-fully-faithful} is a pullback and $F(f)$ is regular epimorphism. To complete the proof of $(a)$ observe that the internal functor \eqref{generic} is fully faithful if and only if the external rectangle in the following diagram is a pullback:
$$
\xymatrix@=55pt{\, {\rho}\ar[d]_{\langle r_1, r_2\rangle}  \ar[r]^{\overline{\pi}_A} & \pi_A(\rho)  \ar[d] \ar[r]^{F(\overline{f})} & \pi_A(\sigma) \ar[d] 
\\ 
A \times A \ar[r]_-{\pi_A \times \pi_A} & {\frac{A}{\sim_{\rho}} }  \times {\frac{A}{\sim_{\rho}} }  \ar[r]_{F(f) \times F(f)} & {\frac{A}{\sim_{\sigma}} }  \times {\frac{A}{\sim_{\sigma}} } 
}
$$
By taking into account that $\pi_A \times \pi_A$ is a regular epimorphism and that the left square is a pullback (see \eqref{direct-inverse}), we conclude by Proposition $2.7$ in \cite{JK} that this is also equivalent to the property that its right square is a pullback, hence to the fact that $F(\overline{f})$ is an isomorphism. 

 The condition $(b)$ is a consequence of the fact that $(f, \overline{f})$ is in $\mathcal M$ if and only if its naturality square is a pullback in $\mathsf{PreOrd}(\mathbb C)$. This latter condition is easily seen to be equivalent to the fact that the square \eqref{lower} is a pullback in $\mathbb C$.
\end{proof}
\subsection*{Goursat categories.}
A regular category $\mathbb C$ is a \emph{Goursat category} \cite{CKP} when, for any pair $R$ and $S$ of equivalence relations on the same object in $\C$, one has the equality $R \circ S \circ R = S \circ R \circ S$. In the varietal context this property becomes the so-called $3$-permutability of congruences, which is slightly more general than the well-known $2$-permutability property, and is usually referred to as the Mal'tsev property in categorical algebra. Examples of regular Goursat categories are provided by the varieties of groups, abelian groups, $R$-modules, associative algebras, Lie algebras, quasigroups, boolean algebras, implication algebras, Heyting algebras, and topological groups. When $\C$ is a Goursat category, then $\mathsf{PreOrd}(\mathbb C)$ is the category $\Eq$ of equivalence relations in $\C$, since any reflexive and transitive relation in $\C$ is symmetric \cite{MFRV}. The category $\Eq$ is itself regular \cite{GRT}, and its full subcategory $\Par$ of partial orders coincides with the full subcategory $\mathsf{Dis}(\mathbb C)$ of discrete equivalence relations in $\C$, since, for any partial order $(A, \rho)$, one has that $\rho = \rho \wedge \rho= \rho \wedge \rho^o = \Delta_A$. 
More generally, by taking into account the results in \cite{MFRV},  we have the following:
\begin{proposition}\label{Goursat}
In a Goursat category the forgetful functors $\Eq \rightarrow \mathsf{PreOrd}(\mathbb C)$ and $\mathsf{Dis}(\mathbb C) \rightarrow \Par $ are isomorphisms. More generally, this is the case in any $n$-permutable regular category (in the sense of \cite{CKP}).
In the exact Goursat case, the adjunction $\eqref{adjunction}$ becomes the adjunction
\begin{equation*}\label{adjunction2}
\xymatrix@=30pt{
{\mathsf{Eq}(\mathbb C) \, } \ar@<1ex>[r]_-{^{\perp}}^-{\pi_0} & {\, \mathsf{Dis}(\mathbb C) ,\, }
\ar@<1ex>[l]^U  }
 \end{equation*}
 where the left adjoint $\pi_0$ is the classical ``connected component functor'', sending an equivalence relation to the coequalizer of its projections. 
\end{proposition}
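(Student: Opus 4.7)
The plan is to reduce everything to two structural facts from \cite{MFRV}: in a Goursat (more generally, an $n$-permutable) regular category, every reflexive and transitive relation is automatically symmetric, and this extends to internal relations. The two isomorphism claims will then be essentially immediate once these facts are invoked, and the description of the adjunction will drop out by specialising the reflection \eqref{adjunction} to this setting.

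First I would handle the inclusion $\Eq\hookrightarrow\mathsf{PreOrd}(\mathbb C)$. It is fully faithful by definition, so it suffices to show it is essentially surjective. Given any internal preorder $(A,\rho)$, the result from \cite{MFRV} supplies a symmetry map $\gamma\colon\rho\to\rho$, so $(A,\rho)\in\Eq$, and the inclusion is an isomorphism of categories. The same sort of argument handles $\mathsf{Dis}(\mathbb C)\hookrightarrow\Par$: given a partial order $(A,\rho)$, the Goursat property forces $\rho=\rho^o$, whence
\[
\rho \;=\; \rho\wedge\rho \;=\; \rho\wedge\rho^o \;=\; {\sim_\rho} \;=\; \Delta_A,
\]
the last equality by the antisymmetry of $\rho$. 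Hence $(A,\rho)\in\mathsf{Dis}(\mathbb C)$. For the $n$-permutable generalisation I would invoke the corresponding statement of \cite{MFRV}, which asserts the same symmetry property of reflexive transitive relations in any $n$-permutable regular category; the two arguments above go through verbatim.

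Finally, to describe the induced adjunction, I would substitute the isomorphisms just established into \eqref{adjunction}, obtaining an adjunction $\pi_0\dashv U$ between $\Eq$ and $\mathsf{Dis}(\mathbb C)$, where $\pi_0 := F$ under the identifications. It remains to check that $\pi_0$ is the classical connected components functor. For an equivalence relation $(A,R)$, one has $R=R^o$, and therefore $\sim_R = R\wedge R^o = R$. Looking at the construction of $F$ given in \eqref{Nsee}, the unit map is the coequalizer $\pi\colon A\to A/\sim_R \,=\, A/R$ of the projections of $R$, and the image $\pi(R)$ on the quotient is $\Delta_{A/R}$ (since $R$ is the kernel pair of $\pi$). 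Thus $F(A,R) = (A/R,\Delta_{A/R})\in\mathsf{Dis}(\mathbb C)$, which is exactly the connected components construction.

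The only non-routine ingredient is the symmetry of reflexive transitive relations in Goursat (and $n$-permutable) regular categories; this is imported as a black box from \cite{MFRV}, so there is no genuine obstacle left, only the bookkeeping of identifying $F$ with $\pi_0$ via $\sim_R=R$.
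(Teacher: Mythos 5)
Your argument is correct and follows essentially the same route as the paper: the paper also derives both isomorphisms from the fact (imported from \cite{MFRV}) that reflexive transitive relations are symmetric in Goursat, and more generally $n$-permutable, regular categories, together with the chain $\rho=\rho\wedge\rho^o=\Delta_A$ for partial orders. Your identification of $F$ with $\pi_0$ via $\sim_R=R$ and $\pi(R)=\Delta_{A/R}$ is just the bookkeeping the paper leaves implicit, and it is correctly carried out (note it uses exactness, which is exactly why the statement restricts to the exact Goursat case).
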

The reflector in this latter adjunction is known to be semi-left-exact \cite{CHK} (even when the base category is only assumed to be exact, see \cite{BournSemi}).

\section{Monotone-light factorization system for preordered sets.}
From now on we shall assume that ${\mathbb C}$ is the category $\mathsf{Set}$ of sets.
In this case we shall give a precise characterization of the class ${\mathcal M}^*$ of \emph{coverings} in the sense of categorical Galois theory \cite{Janelidze-Galois} with respect to the adjunction \eqref{adjunction}, and then prove the existence of a monotone-light factorization system in ${\mathsf{PreOrd}(\mathsf{Set})}$.

Recall that, given a full reflective subcategory $\mathbb A$ of a category $\mathbb B$ as in \eqref{semi-loc}
 with a semi-left-exact reflector $G \colon \mathbb B \rightarrow \mathbb A$, 
 a morphism $f \colon A \rightarrow B$ is a \emph{covering} with respect to the adjunction \eqref{semi-loc} when there is an \emph{effective descent morphism} $p \colon E \rightarrow B$ with the property that the first projection $p_1 \colon E \times_B A \rightarrow E$ in the pullback 
 \begin{equation}\label{pullback}
 \xymatrix{E \times_B A \ar[r]^-{p_2} \ar[d]_{p_1} & A \ar[d]^f \\
E \ar[r]_p & B
}
\end{equation}
 belongs to $\mathcal M$, i.e., $p_1$ is a \emph{trivial covering}.
In the category ${\mathsf{PreOrd}(\mathsf{Set})}$ the effective descent morphisms have been characterized as follows (see Proposition $3.4$ \cite{JS}):
\begin{lemma}\label{char.effective.preOrd}
A morphism
$$
\xymatrix{   \rho  
\ar@<.5ex>[d]^{r_2} \ar@<-.5ex>[d]_{r_1}  \ar[r]^{\overline{p}}  & \sigma \ar@<.5ex>[d]^{s_2} \ar@<-.5ex>[d]_{s_1}  \\
   E \ar[r]_{p} & {B} &
}
$$
is an \emph{effective descent morphism} in ${\mathsf{PreOrd}(\mathsf{Set})}$ if and only if, given elements $b_1, b_2, b_3 \in B$ such that $b_1 \sigma b_2$ and $b_2 \sigma b_3$, there are elements
$e_1, e_2, e_3$ in $E$ such that $e_1 \rho e_2$ and $e_2 \rho e_3$ with $p(e_i)= b_i$, for $i=1,2,3$.
\end{lemma}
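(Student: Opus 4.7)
The plan is to reduce the statement to the well-known characterization of effective descent morphisms in the category $\mathsf{Cat}$ of small categories, due to Janelidze and Sobral. The inclusion $\mathsf{PreOrd}(\mathsf{Set})\hookrightarrow \mathsf{Cat}$ is a full embedding onto the subcategory of categories whose hom-sets contain at most one element. This subcategory is closed under products and equalizers (any subcategory of a preorder is a preorder), hence closed under limits in $\mathsf{Cat}$; in particular, pullbacks, equalizers and the categories of descent data computed in $\mathsf{PreOrd}(\mathsf{Set})$ agree with those computed in $\mathsf{Cat}$.

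First I would invoke the Janelidze--Sobral theorem: a functor $p\colon E\to B$ is an effective descent morphism in $\mathsf{Cat}$ if and only if it is surjective on composable pairs of arrows, that is, for every composable pair $b_1\to b_2\to b_3$ in $B$ there exist arrows $e_1\to e_2\to e_3$ in $E$ with $p(e_i)=b_i$. Combined with the observation of the previous paragraph, this immediately gives that a morphism $(p,\overline{p})$ in $\mathsf{PreOrd}(\mathsf{Set})$ is effective descent in $\mathsf{PreOrd}(\mathsf{Set})$ if and only if the corresponding functor is effective descent in $\mathsf{Cat}$.

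Next I would translate the condition to the preorder setting. In a preorder $(B,\sigma)$, an arrow $b_1\to b_2$ is nothing but the assertion $b_1\sigma b_2$, and a composable pair $b_1\to b_2\to b_3$ corresponds to the pair of relations $b_1\sigma b_2$ and $b_2\sigma b_3$. Likewise, a lift to a composable pair in $E$ is a triple $e_1,e_2,e_3$ with $e_1\rho e_2$ and $e_2\rho e_3$. Substituting these translations into the Janelidze--Sobral condition yields exactly the statement of the lemma.

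The main obstacle is the legitimate verification that descent in $\mathsf{PreOrd}(\mathsf{Set})$ and in $\mathsf{Cat}$ coincide for the morphisms under consideration; but because $\mathsf{PreOrd}(\mathsf{Set})$ is limit-closed in $\mathsf{Cat}$, the simplicial kernel of $p$ and therefore the category of descent data is the same whether computed in $\mathsf{PreOrd}(\mathsf{Set})$ or in $\mathsf{Cat}$, so the equivalence requested in the definition of effective descent holds in one category if and only if it holds in the other. This is precisely the content of Proposition~$3.4$ of \cite{JS} applied in the present context.
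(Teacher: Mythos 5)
Your reduction to $\mathsf{Cat}$ has two genuine problems, and the paper itself does not argue this way (it simply quotes the statement from Proposition~3.4 of \cite{JS}). First, the theorem you invoke is misquoted: surjectivity on composable \emph{pairs} of arrows characterizes the (pullback-stable regular epimorphisms, i.e.\ descent) morphisms of $\mathsf{Cat}$, while the \emph{effective} descent functors are characterized by surjectivity on composable \emph{triples} (Le Creurer; see also Janelidze--Sobral--Tholen, ``Beyond Barr exactness''). So even granting your transfer step, the condition it would produce is not the one in the Lemma. Second, the transfer step itself is not valid: closure of $\mathsf{PreOrd}(\mathsf{Set})$ under limits in $\mathsf{Cat}$ does guarantee that the kernel pair $E\times_B E$ and the higher simplicial kernels agree in the two categories, but the categories of descent data do not coincide. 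Descent data for $p$ in $\mathsf{Cat}$ live on arbitrary categories over $E$, whereas in $\mathsf{PreOrd}(\mathsf{Set})$ they are restricted to preorders over $E$, which form only a full subcategory; likewise $\mathsf{Cat}/B$ is strictly larger than $\mathsf{PreOrd}(\mathsf{Set})/B$. The standard comparison result (Janelidze--Tholen) gives a one-way implication only: if $p$ is effective descent in $\mathsf{Cat}$ and pullback along $p$ reflects preorders, then it is effective descent in $\mathsf{PreOrd}(\mathsf{Set})$; the converse, which your ``if and only if'' needs, is false.

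Indeed the two notions genuinely differ on morphisms of preorders, so no such equivalence can be proved. Let $B$ be the chain $0\le 1\le 2\le 3$ and let $E$ be the disjoint union, over all composable pairs $b_1\le b_2\le b_3$ of $B$, of a three-element chain mapped onto that pair. Then every composable pair of $B$ lifts, so by the Lemma $p$ is an effective descent morphism in $\mathsf{PreOrd}(\mathsf{Set})$; but no composable triple of $E$ can cover $0\le 1\le 2\le 3$ (each connected component of $E$ has at most three distinct elements), so $p$ is not an effective descent morphism in $\mathsf{Cat}$. In particular the forward implication of the Lemma cannot be obtained from descent in $\mathsf{Cat}$. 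Finally, your closing appeal to Proposition~3.4 of \cite{JS} is circular: that proposition \emph{is} the statement of the present Lemma (it is exactly what the paper cites), not a statement comparing descent in $\mathsf{PreOrd}(\mathsf{Set})$ with descent in $\mathsf{Cat}$. A correct self-contained proof would instead verify monadicity of $p^*\colon \mathsf{PreOrd}(\mathsf{Set})/B\to \mathsf{PreOrd}(\mathsf{Set})/E$ (or the equivalence with the category of descent data) directly, which is where the composable-pair condition actually enters.
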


\begin{proposition}\label{effective}
 Let $(B,\rho)$ be a preordered set. Then there exists a partially ordered set $(A,\le)$ and an effective descent morphism ${p}\colon (A,\le)\to(B,\rho)$ in $\mathsf{PreOrd}(\mathsf{Set})$.
\end{proposition}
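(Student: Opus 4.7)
The plan is to reduce the problem directly to Lemma~\ref{char.effective.preOrd}, which only requires that any composable pair $b_1\,\rho\,b_2\,\rho\,b_3$ in $B$ be lifted to some chain $e_1 \le e_2 \le e_3$ in $A$. The simplest way to guarantee such lifts is to ``spread $B$ out'' into several indexed copies so that chains of length three can be witnessed by strictly increasing indices.

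Concretely, I would take $A := B \times \{1,2,3\}$ and define $(b,i) \le (c,j)$ to hold iff either $(b,i)=(c,j)$, or $i<j$ and $b\,\rho\,c$. Reflexivity is built into the equality clause, antisymmetry is forced by the irreflexivity of $<$ on $\{1,2,3\}$, and transitivity is obtained by combining the transitivity of $\rho$ with that of $<$; hence $(A,\le)$ is a genuine partial order. The first projection $p\colon A \to B$, $(b,i) \mapsto b$, is a morphism in $\mathsf{PreOrd}(\mathsf{Set})$ since $(b,i) \le (c,j)$ implies either $b=c$ (whence $b\,\rho\,b$ by reflexivity of $\rho$) or $b\,\rho\,c$ directly.

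The lifting property is then immediate: given any $b_1\,\rho\,b_2\,\rho\,b_3$ in $B$, the elements $e_i := (b_i,i) \in A$ satisfy $p(e_i)=b_i$, together with $e_1 \le e_2 \le e_3$ (both comparisons using the second clause, with the strict inequalities $1<2<3$ on the indices). By Lemma~\ref{char.effective.preOrd}, $p$ is an effective descent morphism from the partially ordered set $(A,\le)$ to $(B,\rho)$, as required.

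I do not expect any real obstacle: Lemma~\ref{char.effective.preOrd} converts the problem into a purely combinatorial lifting condition, and the construction above is designed to make it trivial. The only mild subtlety is the introduction of the auxiliary index together with the use of \emph{strict} inequality on it, which is exactly what forces antisymmetry and turns what would otherwise be a preorder on $A$ into a partial order; one could equally well take $A := B \times \mathbb{N}$ and obtain the same conclusion.
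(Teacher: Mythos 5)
Your proof is correct. You reduce to Lemma~\ref{char.effective.preOrd} exactly as the paper does, and you use the same underlying idea of three indexed layers so that a chain $b_1\,\rho\,b_2\,\rho\,b_3$ can be lifted along strictly increasing indices; but your construction of the partial order is genuinely different and simpler. The paper builds $A$ inside $(B/\!\!\sim_\rho)\times\{1,2,3\}\times B$ (restricted to triples $([b],i,\beta)$ with $\beta\in[b]$) and equips it with the lexicographic order, so antisymmetry is inherited from the quotient poset $B/\!\!\sim_\rho$ and the discrete order on $B$; you instead put the order directly on $B\times\{1,2,3\}$, declaring $(b,i)\le(c,j)$ iff $(b,i)=(c,j)$ or ($i<j$ and $b\,\rho\,c$), so that antisymmetry is forced by the strictness of the index comparison and no passage to the quotient is needed. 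Your verifications go through: transitivity follows from transitivity of $\rho$ together with transitivity of $<$ (the mixed cases with the equality clause are immediate), monotonicity of the projection $p$ uses reflexivity of $\rho$ in the equality case, and the lift $e_i=(b_i,i)$ satisfies the condition of Lemma~\ref{char.effective.preOrd} (which in particular also yields surjectivity). In short, your route is a more economical variant of the paper's argument, avoiding the auxiliary quotient and the lexicographic order, while the paper's construction has the mild advantage of being phrased in terms of the data $(\sim_\rho,\le_\rho)$ already produced by the reflection, but proves nothing more.
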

\begin{proof} 
 Keeping in mind \cite[Proposition 2.2]{FF} or the proof of Lemma \ref{N-torsion}, the preorder $\rho$ induces an equivalence relation $\sim_\rho$ on $B$ and a partial order $\leq_\rho:=\pi(\rho)$ on the quotient set $B/\!\!\sim_\rho$, defined by setting $[b]\leq_\rho [b']$ if, and only if, $b\rho b'$. The canonical projection $\pi:(B,\rho)\to (B/\!\!\sim_\rho,\leq_\rho)$ is a morphism of preordered sets. Now, let $\{1,2,3\}$ be endowed with the usual order $\leq$, let $B$ be endowed with the discrete partial order $=$ and let $\Gamma:=(B/\!\!\sim_\rho)\times \{1,2,3\}\times B$, equipped with the lexicographic order $\preceq$ (it is partial order, because it is induced by the partial orders $\leq_\rho,\leq,=$). Consider the subset 
$$
A:=\{([b], i, \beta)\in \Gamma\mid \beta\in [b] \}
$$
of $\Gamma$, endowed with the partial order induced by $\preceq$, and let ${p}:(A, \preceq)\to (B,\rho)$ be the mapping such that $p([b],i,\beta):=\beta$, for any $([b],i,\beta)\in A$. We claim that $p$ is a morphism of preordered sets. Indeed, if $([b],i,\beta),([b'],i',\beta')\in A$ and $([b],i,\beta)\preceq ([b'], i', \beta')$, then we have two cases. If $[b]=[b']$, then $\beta \sim_\rho b\sim_\rho b'\sim_\rho \beta'$ and, a fortiori (since $\sim_\rho$ is coarser than $\rho$), $\beta\rho \beta'$. If $[b]\neq [b']$, then we must have $[b]<_\rho[b']$, and thus $b\rho b'$. Since $\beta \sim_\rho b$ and $b'\sim_\rho \beta'$, we infer again $\beta \rho \beta'$. By definition, $p$ is surjective. Now let $b_1,b_2,b_3\in B$ be such that $b_1 \rho b_2$ and $b_2\rho b_3$. For any $i\in\{1,2,3\}$, let $\zeta_i:=([b_i],i,b_i)\in A$, and note that, by definition, ${p}(\zeta_i)=b_i$. By Lemma \ref{char.effective.preOrd}, the conclusion will follow if we prove that $\zeta_1\preceq \zeta_2$ and that $\zeta_2\preceq \zeta_3$. We only prove that $\zeta_1\preceq\zeta_2$ (the other statement has a similar proof). In the case $[b_1]=[b_2]$, the inequality ${1\le2}$ implies $\zeta_1\preceq\zeta_2$. Suppose now $[b_1]\neq [b_2]$. Then $b_1\rho b_2$ implies $[b_1]{\le_\rho}[b_2]$ and thus $\zeta_1\preceq \zeta_2$. The proof is now complete. 
\end{proof}
The following remark will be useful:
\begin{lemma}\label{3-2}
Consider a morphism $$
\xymatrix{   \rho  
\ar@<.5ex>[d]^{r_2} \ar@<-.5ex>[d]_{r_1}  \ar[r]^{\overline{f}}  & \sigma \ar@<.5ex>[d]^{s_2} \ar@<-.5ex>[d]_{s_1}  \\
   A \ar[r]_{f} & {B} &
}
$$
in ${\mathsf{PreOrd}(\mathsf{Set})}$ such that $(B, \sigma)$ is a partial order, and any fibre $(f^{-1}(b), \rho)$ is a partial order (for any $b \in B$). Then $(A, \rho)$ is a partial order.
\end{lemma}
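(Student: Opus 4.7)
The plan is to establish antisymmetry of $\rho$ on $A$ directly using the hypotheses, since reflexivity and transitivity are already part of the preorder structure $(A,\rho)$. So it suffices to show that whenever $a_1,a_2\in A$ satisfy $a_1\rho a_2$ and $a_2\rho a_1$, we must have $a_1=a_2$.

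First, I would push the two relations forward through $f$. Since $(f,\overline{f})$ is a morphism in $\mathsf{PreOrd}(\mathsf{Set})$, it maps $\rho$ into $\sigma$, giving $f(a_1)\sigma f(a_2)$ and $f(a_2)\sigma f(a_1)$. By hypothesis $(B,\sigma)$ is a partial order, so antisymmetry of $\sigma$ forces $f(a_1)=f(a_2)$; call this common value $b$. Thus both $a_1$ and $a_2$ lie in the fibre $f^{-1}(b)$.

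Next, I would appeal to the fibre hypothesis. The relation $\rho$ restricted to $f^{-1}(b)$ is a partial order by assumption, and since $a_1\rho a_2$ and $a_2\rho a_1$ both hold in $A$ between elements of the fibre, they hold in the restricted relation. Antisymmetry in $(f^{-1}(b),\rho)$ then yields $a_1=a_2$, as required.

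There is no significant obstacle here: the argument is a two-step antisymmetry chase, first in the base $(B,\sigma)$ to localize $a_1,a_2$ into a single fibre, and then in that fibre to identify them. The only subtlety worth mentioning explicitly is that restricting $\rho$ to $f^{-1}(b)$ produces exactly the preorder whose antisymmetry is hypothesised, so no additional compatibility check is needed.
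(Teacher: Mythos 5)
Your argument is correct and is essentially the same as the paper's proof: both reduce the claim to antisymmetry, use antisymmetry of $\sigma$ to place the two elements in a common fibre, and then conclude by antisymmetry of the fibre.
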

\begin{proof}
Consider two elements $a$, $a'$ in $A$ with the property that $a \rho a'$ and $a' \rho a$. One then has that $f(a) \sigma f(a')$ and $f(a') \sigma f(a)$, so that $f(a') = f(a)$ and then both $a$ and $a'$ belong to the same fibre $f^{-1}(f(a))$, which is a partial order by assumption, and $a =a'$.
\end{proof}
\begin{proposition}\label{character.covering}
A morphism $f \colon (A, \rho)  \rightarrow (B, \sigma)$ in $\mathsf{Preord(Set)}$ is a \emph{covering} with respect to the adjunction \eqref{adjunction} if and only if any fibre $(f^{-1}(b), \rho)$ belongs to $\mathsf{ParOrd(Set)}$ for every $ b \in B$.
\end{proposition}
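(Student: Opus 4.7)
The plan is to prove both implications of the biconditional by exploiting the characterization of $\mathcal{M}$ in Proposition \ref{factor-E}(b)—namely, that a morphism is in $\mathcal{M}$ precisely when the induced functor between the associated $\sim$-equivalence relations is a discrete fibration—together with the effective descent construction from Proposition \ref{effective} and the auxiliary Lemma \ref{3-2}.

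For the ``if'' direction, assume every fibre $(f^{-1}(b),\rho)$ is a partial order. I would apply Proposition \ref{effective} to $(B,\sigma)$ to obtain a partial order $(E,\leq)$ and an effective descent morphism $p\colon (E,\leq)\to(B,\sigma)$, then form the pullback
$$\xymatrix{(E\times_B A,\tau) \ar[r]^-{p_2} \ar[d]_{p_1} & (A,\rho) \ar[d]^{f}\\ (E,\leq) \ar[r]_{p} & (B,\sigma)}$$
in $\mathsf{PreOrd(Set)}$, with $(e,a)\,\tau\,(e',a')$ iff $e\leq e'$ and $a\,\rho\,a'$. Reflexivity of $\leq$ makes the restriction of $\tau$ to the fibre $p_1^{-1}(e)\cong \{e\}\times f^{-1}(p(e))$ coincide with $\rho$ restricted to $f^{-1}(p(e))$, so each fibre of $p_1$ is a partial order by hypothesis. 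Since $(E,\leq)$ is a partial order, Lemma \ref{3-2} then yields that $(E\times_B A,\tau)$ is itself a partial order. At partial orders the units of the reflection \eqref{adjunction} are identities, hence Proposition \ref{factor-E}(b) gives $p_1\in\mathcal{M}$; so $p_1$ is a trivial covering and $f$ is a covering.

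For the ``only if'' direction, the key auxiliary observation is that every morphism $g\colon(X,\xi)\to(Y,\mu)$ in $\mathcal{M}$ has partially ordered fibres. Indeed, by Proposition \ref{factor-E}(b), $(g,\hat g)\colon(X,\sim_\xi)\to(Y,\sim_\mu)$ is a discrete fibration; for any $x,x'\in g^{-1}(y)$ with $x\sim_\xi x'$, uniqueness of the $\sim_\xi$-lift at $x$ of the (trivial) relation $g(x)\sim_\mu g(x)$ forces $x'=x$. Assuming now that $f$ is a covering, with effective descent witness $p\colon(E,\tau)\to(B,\sigma)$ and trivial covering $p_1\in\mathcal{M}$, I fix $b\in B$. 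Applying Lemma \ref{char.effective.preOrd} to $b_1=b_2=b_3=b$ (which is valid by reflexivity of $\sigma$) yields $e\in E$ with $p(e)=b$. By the auxiliary observation the fibres of $p_1$ are partial orders, and, exactly as in the other direction, $p_1^{-1}(e)$ is order-isomorphic to $(f^{-1}(b),\rho)$; thus the latter is a partial order.

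The only mildly delicate step I anticipate is the identification of the fibrewise preorder of $p_1$ with $\rho$ restricted to $f^{-1}(p(e))$: the pullback preorder $\tau$ couples $\leq$ on $E$ with $\rho$ on $A$, and it is only because one restricts to a single $e$ with $e\leq e$ that the $E$-component collapses and leaves exactly $\rho$. Once that identification is in place, both directions reduce to a clean application of the discrete fibration characterization of $\mathcal{M}$ together with the existence and surjectivity of effective descent morphisms from partial orders.
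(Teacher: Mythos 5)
Your proof is correct and follows essentially the same route as the paper: both directions pull back along the effective descent morphism supplied by Proposition \ref{effective}, with Lemma \ref{char.effective.preOrd} giving surjectivity, Lemma \ref{3-2} giving the partial order on the pullback, and the identification $p_1^{-1}(e)\cong (f^{-1}(b),\rho)$ doing the transfer between fibres. The only (harmless) variation is in the forward direction, where the paper reads off the partially ordered fibres from the unit-square pullback defining a trivial covering, while you deduce the same fact from the discrete-fibration characterization of $\mathcal M$ in Proposition \ref{factor-E}(b); both arguments are valid.
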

\begin{proof}
Let us first prove that any covering $f \colon (A, \rho)  \rightarrow (B, \sigma)$ has its fibres in $\mathsf{ParOrd(Set)}$. Consider a pullback \eqref{pullback},
where $p$ is an effective descent morphism and $p_1$ a trivial covering. In particular the map $p$ is surjective (by Lemma \ref{char.effective.preOrd}): for each $b \in B$ there is an $e \in E$ with $p(e)=b$ and, moreover, $p_1^{-1}(e)\cong f^{-1}(b)$. Since $p_1$ is a trivial covering, the following square is a pullback:
\begin{equation*}\label{pullback-2}
 \xymatrix{E \times_B A \ar[r]^-{\eta_{E \times_B A}} \ar[d]_{p_1} & F(E \times_B A)  \ar[d]^{F(p_1)} \\
E \ar[r]_{\eta_E} & F(E)
}
\end{equation*}
Then, each fibre $f^{-1}(b)\cong p_1^{-1}(e)$ is also isomorphic to ${F(p_1)}^{-1}(\eta_E(e)) \in \mathsf{ParOrd(Set)}$.

Conversely, assume that each fibre $f^{-1}(b)$ belongs $\mathsf{ParOrd(Set)}$, for $b \in B$. By Proposition \ref{effective} one can then ``cover'' $B$ with an effective descent morphism $p\colon E \rightarrow B$, with the domain $E$ in $\mathsf{ParOrd(Set)}$, and build the pullback \eqref{pullback}. We see that $p_1$ satisfies the assumptions of Lemma \ref{3-2}, so that $p_1 \colon E \times_B A \rightarrow E$ lies in $\mathsf{ParOrd(Set)}$, and $f$ is a covering, as desired.
\end{proof}
Note that the characterization of the coverings given in the proposition above is the same as the one of the so-called \emph{locally semi-simple coverings} in \cite{JMT2}. 

We are then ready to prove the following 
\begin{corollary}{}\label{characterization-N-k}
A morphism $f \colon (A, \rho)  \rightarrow (B, \sigma)$ in $\mathsf{Preord(Set)}$ is a \emph{covering} with respect to the adjunction \eqref{adjunction} if and only if its $\N$-kernel ${\mathsf{Ker}}_{\N}(f)$ is a partial order.\end{corollary}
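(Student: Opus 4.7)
The plan is to identify the $\N$-kernel of $f$ explicitly in set-theoretic terms and then invoke Proposition \ref{character.covering}.

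First I would construct the $\N$-kernel of $(f,\overline{f}) \colon (A,\rho) \to (B,\sigma)$ as the identity morphism
$(1_A,\overline{1}_A) \colon (A,\rho_f) \to (A,\rho)$, where
$$ \rho_f := \{(a,a') \in \rho \mid f(a) = f(a')\} $$
is the subpreorder of $\rho$ consisting of the pairs lying in a common fibre of $f$. Viewed as a preordered set, $(A,\rho_f)$ is nothing but the disjoint union $\bigsqcup_{b \in B}(f^{-1}(b),\rho)$ of the fibre preorders.

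Next I would verify the universal property of the $\N$-kernel. The crucial preliminary observation is that a morphism $(g,\overline{g}) \colon (L,\mu) \to (B,\sigma)$ in $\mathsf{PreOrd}(\mathsf{Set})$ lies in $\N$ if and only if $g(l) = g(l')$ whenever $(l,l') \in \mu$; indeed, morphisms out of a discrete preorder $(D,\Delta_D)$ must send $\mu$-related pairs to the diagonal of $D$. Applied to $f \circ 1_A = f$, this shows $f \circ 1_A \in \N$ by construction of $\rho_f$. Conversely, if $(\lambda,\overline{\lambda}) \colon (L,\mu) \to (A,\rho)$ satisfies $f\lambda \in \N$, then for each $(l,l') \in \mu$ one has $(\lambda(l),\lambda(l')) \in \rho$ and $f\lambda(l)=f\lambda(l')$, so $(\lambda(l),\lambda(l')) \in \rho_f$; this produces the unique factorization of $\lambda$ through $1_A$.

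Once the identification $\mathsf{Ker}_\N(f) = (A,\rho_f)$ is established, the conclusion is immediate: $(A,\rho_f)$ is antisymmetric if and only if each fibre $(f^{-1}(b),\rho)$ is antisymmetric, so the result follows by combining this with Proposition \ref{character.covering}. The only step requiring any care is the preliminary characterization of membership in $\N$; everything else is a direct unpacking of definitions.
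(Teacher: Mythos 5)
Your proposal is correct and follows essentially the same route as the paper: identify $\mathsf{Ker}_{\N}(f)$ with $(A,\rho\wedge Eq(f))$ (your $\rho_f$ is exactly $\rho\wedge Eq(f)$), observe that this preorder is antisymmetric precisely when every fibre $(f^{-1}(b),\rho)$ is, and conclude by Proposition \ref{character.covering}. The only difference is that you verify the universal property of the $\N$-kernel (via the characterization of membership in $\N$), a step the paper simply takes as known; your verification is correct.
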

\begin{proof}
It suffices to check that any fibre $f^{-1}(b)$ is a partial order if and only if $${\mathsf{Ker}}_{\N}(f) = \rho \wedge Eq(f) \in \mathsf{ParOrd(Set)}.$$ Assume that ${\mathsf{Ker}}_{\N}(f) = \rho \wedge Eq(f) \in \mathsf{ParOrd(Set)}$ and consider $a,a'$ both in $f^{-1}(b)$, with $a \rho a'$ and $a' \rho a$. It follows that $f(a) =b= f(a')$, hence $(a,a')\in \rho \wedge Eq(f)$, so that $a=a'$.
Conversely, if any fibre $f^{-1}(b)$ belongs to $\mathsf{ParOrd(Set)}$, consider $(a,a') \in {\mathsf{Ker}}_{\N}(f)$ such that $a \rho a'$ and $a' \rho a$. Then $\{a, a' \} \subset f^{-1}(f(a))$, and $a=a'$ because $f^{-1}(f(a))$ is a partial order.
\end{proof}
We shall also prove that there is a monotone-light factorization system $({\mathcal E}', {\mathcal M}^*)$ (in the sense of \cite{CJKP}) in ${\mathsf{PreOrd}(\mathsf{Set})}$, where ${\mathcal E}'$ is the class of morphisms that are stably in $\mathcal E$.
 
\begin{lemma}\label{characterization-bar-E}
The following conditions are equivalent for a surjective morphism \\ $f \colon (A, \rho)  \rightarrow (B, \sigma)$ :
\begin{enumerate}
	\item $f$ is fully faithful.
	\item ${\mathsf{Ker}}_{\N}(f) = (A, Eq(f))$ and $f(\rho) = \sigma$. 
	\item $Eq(f)\subseteq \rho$ and $f$ is a regular epimorphism in $\mathsf{PreOrd(Set)}$. 
\end{enumerate}
\end{lemma}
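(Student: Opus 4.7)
My plan is to prove the implications in a cycle $(a)\Rightarrow(b)\Rightarrow(c)\Rightarrow(a)$, exploiting three observations: first, ``fully faithful'' unwinds (in the set case) to the condition $a\rho a'\iff f(a)\sigma f(a')$; second, by definition of the ideal $\N$ and the pullback defining $\mathsf{Ker}_{\N}(f)$, we have $\mathsf{Ker}_{\N}(f)=(A,\rho\wedge Eq(f))$, so the equality $\mathsf{Ker}_{\N}(f)=(A,Eq(f))$ is simply the inclusion $Eq(f)\subseteq\rho$; third, a morphism $(f,\overline f)$ in $\mathsf{PreOrd}(\mathsf{Set})$ is a regular epimorphism precisely when $f$ is surjective \emph{and} the induced map $\overline f\colon\rho\to\sigma$ is surjective, i.e.\ $f(\rho)=\sigma$ (this is standard for categories of relational structures over $\mathsf{Set}$). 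Given that surjectivity of $f$ is part of the hypothesis, this last equivalence is immediate.

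For $(a)\Rightarrow(b)$, assuming $f$ fully faithful, the inclusion $Eq(f)\subseteq\rho$ follows because $(a,a')\in Eq(f)$ gives $f(a)=f(a')$, hence $f(a)\sigma f(a')$ by reflexivity, hence $a\rho a'$. For $f(\rho)=\sigma$, pick $(b,b')\in\sigma$, lift to $(a,a')$ with $f(a)=b,\ f(a')=b'$ by surjectivity, and apply fully faithfulness to get $a\rho a'$, so $(b,b')\in f(\rho)$. The implication $(b)\Rightarrow(c)$ is then a direct translation via the three observations above.

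The only substantive step is $(c)\Rightarrow(a)$. Suppose $Eq(f)\subseteq\rho$ and $f$ is a regular epimorphism, i.e.\ $f(\rho)=\sigma$. Given $a,a'\in A$ with $f(a)\sigma f(a')$, since $\sigma=f(\rho)$ there exist $a_1,a_1'\in A$ with $f(a_1)=f(a)$, $f(a_1')=f(a')$, and $a_1\rho a_1'$. Then $(a,a_1),(a_1',a')\in Eq(f)\subseteq\rho$, so transitivity of $\rho$ gives
\[
a\,\rho\,a_1\,\rho\,a_1'\,\rho\,a',
\]
hence $a\rho a'$. Combined with the fact that $a\rho a'$ always implies $f(a)\sigma f(a')$ (because $(f,\overline f)$ is a morphism in $\mathsf{PreOrd}(\mathsf{Set})$), this gives the biconditional that defines fully faithfulness.

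The main obstacle, if any, is the characterization of regular epimorphisms in $\mathsf{PreOrd}(\mathsf{Set})$ as those $(f,\overline f)$ with $f$ surjective and $f(\rho)=\sigma$; this can be justified by noting that the coequalizer in $\mathsf{PreOrd}(\mathsf{Set})$ of a pair of morphisms is computed by taking the coequalizer on underlying sets and endowing it with the image of the source preorder. The rest is essentially bookkeeping around the identities $\mathsf{Ker}_{\N}(f)=\rho\wedge Eq(f)$ and $f(\rho)=\sigma$ together with the transitivity trick used in $(c)\Rightarrow(a)$.
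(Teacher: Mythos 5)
Your cycle $(a)\Rightarrow(b)\Rightarrow(c)\Rightarrow(a)$ and the transitivity trick in $(c)\Rightarrow(a)$ are essentially an elementwise unwinding of the paper's relational computation $f^{-1}(\sigma)=f^o\circ f\circ\rho\circ f^o\circ f=Eq(f)\circ\rho\circ Eq(f)\le\rho$, and your identification $\mathsf{Ker}_{\mathcal N}(f)=(A,\rho\wedge Eq(f))$ matches what the paper uses. The genuine gap is your characterization of regular epimorphisms in $\mathsf{PreOrd}(\mathsf{Set})$: it is \emph{not} true that $(f,\overline f)$ is a regular epimorphism iff $f$ is surjective and $f(\rho)=\sigma$, because the coequalizer of a pair in $\mathsf{PreOrd}(\mathsf{Set})$ carries the reflexive--\emph{transitive closure} of the image relation, not the image itself (the image of a preorder under a quotient map need not be transitive). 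Concretely, take $A=\{x,y,y',z\}$ with $\rho$ reflexive plus $x\rho y$ and $y'\rho z$, and $f\colon A\to B=\{u,v,w\}$ with $f(x)=u$, $f(y)=f(y')=v$, $f(z)=w$, where $\sigma$ is reflexive plus $u\sigma v$, $v\sigma w$, $u\sigma w$: this $f$ is the coequalizer of its kernel pair, hence a regular epimorphism, yet $(u,w)\in\sigma\setminus f(\rho)$. The correct statement (the paper's citation of Janelidze--Sobral, Proposition 2.2, and its topological form in the paper's Lemma on regular epis in $\mathsf{Alex}$) is that $f$ is a regular epi iff it is surjective and $\sigma$ is the \emph{smallest} preorder containing $f(\rho)$.

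This matters exactly where you use the wrong direction: in $(c)\Rightarrow(a)$ you pass from ``$f$ is a regular epimorphism'' to ``$f(\rho)=\sigma$'' with an ``i.e.'', which is unjustified in general (my example above is a regular epi with $f(\rho)\subsetneq\sigma$; note $Eq(f)\not\subseteq\rho$ there, consistent with the lemma). The step can be repaired, but only by invoking the other hypothesis of $(c)$: if $Eq(f)\subseteq\rho$, then $f(\rho)$ is itself transitive (splice lifts of composable pairs through $Eq(f)\subseteq\rho$ and use transitivity of $\rho$), hence $f(\rho)$ equals the smallest preorder containing it, which for a regular epi is $\sigma$; alternatively, run your lifting-plus-transitivity argument along a finite chain witnessing $f(a)\,\sigma\,f(a')$ in the transitive closure. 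By contrast, the direction you need for $(b)\Rightarrow(c)$ is unproblematic: if $f$ is surjective and $f(\rho)=\sigma$, then $\sigma$ is already the smallest preorder containing the image, so $f$ is a regular epimorphism. With that repair the proof is correct and parallels the paper's, which settles $(b)\Leftrightarrow(c)$ by citing Janelidze--Sobral rather than reproving the coequalizer description.
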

\begin{proof}
$(a)\Longrightarrow (b)$.  When $f \colon (A, \rho)  \rightarrow (B, \sigma)$ is fully faithful, we have $${\mathsf{Ker}}_{\N}(f)= (A, Eq(f) \wedge \rho) = (A, f^{-1} (\Delta_B) \wedge f^{-1} (\sigma)) = (A, f^{-1} (\Delta_B) )= (A, Eq(f)).$$ 
$(b)\Longrightarrow (a)$.  If $Eq(f) = Eq(f) \wedge \rho$ and $\sigma = f(\rho) = f \circ \rho \circ f^o$, we get that $$f^{-1}(\sigma) = f^o \circ \sigma \circ f =  f^o \circ (f \circ \rho \circ f^o) \circ f = Eq(f) \circ \rho \circ Eq(f) \le \rho,$$
and thus $f^{-1}(\sigma) = \rho$, as desired. 

Finally, the equivalence of $(b)$ and $(c)$ is a straightforward consequence of \cite[Proposition 2.2]{JS}
\end{proof}
 \begin{proposition}\label{canonical-factor}
 Any morphism $f \colon (A, \rho) \rightarrow (B, \sigma)$ in ${\mathsf{PreOrd}(\mathsf{Set})}$ has a canonical factorization 
 $$\xymatrix{(A, \rho) \ar[rr]^f \ar@{->>}[dr]_e & &  (B, \sigma) \\
&(\frac{A}{Eq(f) \wedge \sim_{\rho}}, e(\rho)) \ar[ru]_m &  }$$
where $e$ is the quotient, and $m \in {\mathcal M}^*$ is the unique morphism with $m e = f$. Furthermore, the surjective map $e \colon (A, \rho) \rightarrow (\frac{A}{Eq(f) \wedge \sim_{\rho}}, e(\rho))$ is a fully faithful functor. 
 \end{proposition}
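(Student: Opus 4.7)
My plan is to construct the factorization explicitly and then verify the two properties of $e$ and $m$ separately.

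\textbf{Construction.} I would first observe that $Eq(f) \wedge \sim_{\rho}$ is an equivalence relation on $A$, being the intersection of two such, so that the canonical quotient $e \colon A \rightarrow \frac{A}{Eq(f) \wedge \sim_{\rho}}$ is well-defined. I then equip the quotient set with the direct image $e(\rho) = e \circ \rho \circ e^o$; since ${Eq(f) \wedge \sim_{\rho}} \subseteq {\sim_{\rho}} \subseteq \rho$, the same relational calculation used in Lemma \ref{N-torsion} to show that $\pi(\rho)$ is a preorder shows that $e(\rho)$ is reflexive and transitive. Because $Eq(f) \wedge \sim_{\rho} \subseteq Eq(f)$, the map $f$ factors uniquely as a function through $e$, yielding $m$; that $m$ is a morphism of preorders is immediate from the definition of $e(\rho)$ as the direct image of $\rho$ along $e$, together with the fact that $f$ is a morphism of preorders. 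Uniqueness of $m$ with $me = f$ is automatic since $e$ is an epimorphism.

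\textbf{Full faithfulness of $e$.} I would invoke the equivalence $(a) \Leftrightarrow (c)$ in Lemma \ref{characterization-bar-E}. The morphism $e$ is a regular epimorphism in $\mathsf{PreOrd(Set)}$ (being the coequalizer of the projections of its kernel pair, with the target equipped with the direct image preorder), and one has $Eq(e) = Eq(f) \wedge \sim_{\rho} \subseteq \sim_{\rho} = \rho \wedge \rho^o \subseteq \rho$, which is exactly condition $(c)$.

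\textbf{$m$ is a covering.} By Proposition \ref{character.covering}, it suffices to check that each fibre $m^{-1}(b)$ is a partial order under the restriction of $e(\rho)$. I would pick $[a], [a'] \in m^{-1}(b)$, so that $f(a) = f(a') = b$, and assume both $[a] \, e(\rho) \, [a']$ and $[a'] \, e(\rho) \, [a]$. Writing $\approx$ for $Eq(f) \wedge \sim_{\rho}$, the first of these yields representatives $a_1 \approx a$ and $a_2 \approx a'$ with $a_1 \rho a_2$. Since ${\approx} \subseteq {\sim_{\rho}} \subseteq \rho \wedge \rho^o$, we obtain $a \rho a_1 \rho a_2 \rho a'$, hence $a \rho a'$. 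Symmetrically, $a' \rho a$, so $a \sim_{\rho} a'$. Combined with $f(a) = f(a')$ this gives $a \approx a'$, i.e., $[a] = [a']$.

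The only delicate point is this last fibre argument: the relation $[a] \, e(\rho) \, [a']$ only compares \emph{some} representatives of the two classes, so one needs to transport this back to a comparison between the chosen representatives $a$ and $a'$. The key design feature that makes this possible is that we quotient by $Eq(f) \wedge \sim_{\rho}$ rather than by $Eq(f)$ alone: equivalent representatives then differ by $\sim_{\rho}$, which sits inside both $\rho$ and $\rho^o$, so the adjustments of representatives do not break the comparison in either direction.
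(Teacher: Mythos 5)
Your proof is correct, and its overall skeleton matches the paper's: construct the quotient by $Eq(f)\wedge\sim_{\rho}$ with the direct-image preorder, get full faithfulness of $e$ from Lemma \ref{characterization-bar-E}, and then certify $m\in\mathcal{M}^*$ via the covering characterization. The only real divergence is in the last step. The paper verifies $m\in\mathcal{M}^*$ through Corollary \ref{characterization-N-k}: it exhibits a pullback square relating $Eq(f)\wedge\rho$ to $Eq(m)\wedge e(\rho)$, concludes that ${\mathsf{Ker}}_{\N}(m)=e(Eq(f)\wedge\rho)=F({\mathsf{Ker}}_{\N}(f))$, and hence that the $\N$-kernel of $m$ is a partial order. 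You instead go directly to Proposition \ref{character.covering} and check antisymmetry of $e(\rho)$ on each fibre of $m$ by an element chase, using exactly the right observation that equivalent representatives differ by $\sim_{\rho}\subseteq\rho\wedge\rho^o$, so the comparison transports to the chosen representatives; this is sound (empty fibres over points outside the image of $f$ are vacuously partial orders). What your route buys is a more elementary, self-contained verification; what the paper's route buys is the extra structural identity ${\mathsf{Ker}}_{\N}(m)\cong F({\mathsf{Ker}}_{\N}(f))$, which is of independent interest and explains conceptually why $m$ is a covering. Your justification that $e$ is a regular epimorphism (surjective with the direct-image preorder on the codomain, which is already transitive since $Eq(e)\subseteq\rho$) is the same fact from \cite{JS} that underlies the equivalence of $(b)$ and $(c)$ in Lemma \ref{characterization-bar-E}, so that step is consistent with the paper's toolkit as well.
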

\begin{proof}
First observe that the map $e$ is surjective by construction, and consider the commutative diagram 
$$
\xymatrix{Eq(f) \wedge {\sim}_{\rho} \ar[r] \ar@<.5ex>[d]^{f_2} \ar@<-.5ex>[d]_{f_1}& \rho \ar[r]^{\overline{e}} \ar@<.5ex>[d]^{r_2} \ar@<-.5ex>[d]_{r_1} & e(\rho) \ar@<.5ex>[d]^{s_2} \ar@<-.5ex>[d]_{s_1} \\
A \ar@{=}[r] & A \ar@{->>}[r]_-{e} & \frac{A}{Eq(f) \wedge \sim_{\rho}}.
}
$$
The fact that ${{\mathsf{Ker}}_{\N}(e) = } Eq(f) \wedge {\sim}_{\rho} \le \rho$ implies that $e$ is fully faithful (by Lemma \ref{characterization-bar-E}, since $\overline{e}$ is surjective).

On the other hand, one can easily check that the square 
$$
\xymatrix{Eq(f) \wedge \rho \ar[r] \ar@{.>}[d]^{\varepsilon} & \rho \ar@{->>}[d]^{\overline{e}} \\
Eq(m) \wedge e(\rho)\ar[r] & e(\rho) }
$$
is a pullback, where the horizontal morphisms are the canonical inclusions, and $\varepsilon$ is the restriction of $\overline{e}$ to $Eq(f) \wedge \rho$. This implies that $\varepsilon$ is surjective, and that \begin{eqnarray}
 {\mathsf{Ker}}_{\N}(m) & = & (\frac{A}{Eq(f) \wedge \sim_{\rho}) }, Eq(m) \wedge e(\rho) ) \nonumber \\ & = & (\frac{A}{Eq(f) \wedge \sim_{\rho}) }, e(Eq(f) \wedge \rho)) \nonumber \\  & = & F({\mathsf{Ker}}_{\N}(f)) \in {\mathsf{ParOrd}(\mathsf{Set})}, \nonumber \end{eqnarray}
where $F \colon {\mathsf{PreOrd}(\mathsf{Set})} \rightarrow {\mathsf{ParOrd}(\mathsf{Set})}$ is the reflector.
It follows from Corollary \ref{characterization-N-k} that $m \in {\mathcal M}^*$, as desired.
\end{proof}

Let us denote by $\overline{\mathcal E}$ the class of \emph{surjective} maps $f \colon (A, \rho)  \rightarrow (B, \sigma)$ in $\mathsf{Preord(Set)}$ that are \emph{fully faithful}. 
\begin{proposition}\label{monotone-light}
The adjunction 
\begin{equation}\label{fund-set}
\xymatrix@=30pt{
{\mathsf{PreOrd}(\mathsf{Set}) \, } \ar@<1ex>[r]_-{^{\perp}}^-{F} & {\, \mathsf{ParOrd}(\mathsf{Set}) ,\, }
\ar@<1ex>[l]^U  }
 \end{equation}
induces the monotone-light factorization system $(\overline{\mathcal E}, {\mathcal M}^*)$.
\end{proposition}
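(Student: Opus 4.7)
The plan is to apply the general existence theorem for monotone-light factorization systems from \cite{CJKP}. By Theorem \ref{semi-localization}, the reflector $F$ has stable units, so the adjunction \eqref{fund-set} determines an admissible Galois structure whose associated reflective factorization system $(\mathcal E, \mathcal M)$ is the one of Proposition \ref{factor-E}. The CJKP theorem then asserts that $(\mathcal E', \mathcal M^*)$ is itself a factorization system---the monotone-light one---provided every object of $\mathsf{PreOrd}(\mathsf{Set})$ admits an effective descent morphism from an object of $\mathsf{ParOrd}(\mathsf{Set})$. This hypothesis is precisely what Proposition \ref{effective} secures. It will then remain to identify the class $\mathcal E'$ of morphisms stably in $\mathcal E$ with $\overline{\mathcal E}$.

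For the inclusion $\overline{\mathcal E} \subseteq \mathcal E'$, I would first note that any surjective fully faithful $f \colon (A,\rho) \to (B,\sigma)$ lies in $\mathcal E$ by Proposition \ref{factor-E}(a): full faithfulness is given, and the induced map $A/\!\sim_\rho \,\to\, B/\!\sim_\sigma$ is surjective as soon as $f$ is. Stability under pullback is then immediate, since surjections are pullback-stable in $\mathsf{Set}$, and full faithfulness is the condition that the square \eqref{pullback-fully-faithful} be a pullback, which is inherited by further pullback squares in $\mathsf{PreOrd}(\mathsf{Set})$.

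For the reverse inclusion $\mathcal E' \subseteq \overline{\mathcal E}$, any $f \in \mathcal E'$ belongs to $\mathcal E$ and is therefore fully faithful by Proposition \ref{factor-E}(a). To obtain surjectivity, I argue by contradiction: if some $b \in B \setminus f(A)$ existed, pulling $f$ back along the map $(\{\ast\}, \Delta_{\{\ast\}}) \to (B,\sigma)$ sending $\ast$ to $b$ would yield the map $\emptyset \to \{\ast\}$, whose induced map on $F$-quotients is not surjective, hence is not in $\mathcal E$; this contradicts $f \in \mathcal E'$. The explicit factorization $f = m \circ e$ with $e \in \overline{\mathcal E}$ and $m \in \mathcal M^*$ is then already supplied by Proposition \ref{canonical-factor}.

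The main obstacle I anticipate is the clean identification $\mathcal E' = \overline{\mathcal E}$, and within it the surjectivity step above; the remaining verifications---orthogonality and existence of factorizations---follow by assembling Proposition \ref{canonical-factor}, Proposition \ref{effective} and the general CJKP framework, without further delicate arguments.
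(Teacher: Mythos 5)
Your argument is essentially correct, but it follows a genuinely different route from the paper. The paper does not invoke the general stabilization theorem of \cite{CJKP}: it verifies by hand that $(\overline{\mathcal E},{\mathcal M}^*)$ is a factorization system (the explicit $(\overline{\mathcal E},{\mathcal M}^*)$-factorization of Proposition \ref{canonical-factor}, plus a direct orthogonality argument using the descriptions of $\N$-kernels in Lemma \ref{characterization-bar-E} and Corollary \ref{characterization-N-k}), proves $\overline{\mathcal E}\subseteq {\mathcal E}'$ exactly as you do, and then obtains the reverse inclusion ${\mathcal E}'\subseteq\overline{\mathcal E}$ for free from the orthogonality ${\mathcal E}'\perp{\mathcal M}^*$ (Proposition $6.7$ of \cite{CJKP}) together with the fact that $(\overline{\mathcal E},{\mathcal M}^*)$ is already known to be a factorization system. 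You instead get the factorization system $({\mathcal E}',{\mathcal M}^*)$ from the general theory and then identify ${\mathcal E}'$ with $\overline{\mathcal E}$; your direct proof of surjectivity of morphisms in ${\mathcal E}'$ (pulling back along a point $(\{\ast\},\Delta)\to(B,\sigma)$ outside the image, so that the fibre is empty and $\emptyset\to\{\ast\}$ is not inverted by $F$) is a nice elementary substitute for the paper's orthogonality argument. The one point you should make precise is the appeal to ``the CJKP theorem'': what \cite{CJKP} prove is that $({\mathcal E}',{\mathcal M}^*)$ is a factorization system if and only if $(\mathcal E,\mathcal M)$ is \emph{locally stable}; the existence, for every object, of an effective descent morphism from an object of $\mathsf{ParOrd(Set)}$ (Proposition \ref{effective}) is not by itself the stated hypothesis. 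The bridge is short but should be supplied: pulling a morphism back along such a cover $p\colon E\to B$ with $E$ a partial order, the $\mathcal E$-part of its $(\mathcal E,\mathcal M)$-factorization is (isomorphic to) a unit component $\eta_{A'}$, and stable units (Theorem \ref{semi-localization}) implies that every pullback of a unit is inverted by $F$, so units lie in ${\mathcal E}'$ and local stability follows; alternatively one can cite the packaged form of this statement in Xarez's work \cite{Xarez}. With that bridge spelled out, your argument is complete; its cost is reliance on the heavier stabilization machinery, whereas the paper's proof is self-contained, uses only Proposition $6.7$ of \cite{CJKP}, and exhibits the diagonal fill-ins explicitly.
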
 
\begin{proof}
To prove that $\overline{\mathcal E} \subset  {\mathcal E}'$, consider a map $e \colon (A, \rho) \rightarrow (B, \sigma)$ in $\overline{\mathcal E}$. 
By Proposition \ref{factor-E} $(a)$ we know that $e$ belongs to $\mathcal E$: indeed, $e$ is fully faithful by definition of $\overline{\mathcal E}$, and $ F(e): \frac{A}{\sim_{\rho}} \rightarrow \frac{B}{\sim_{\sigma}}$ is clearly surjective, because $e$ is surjective.
Furthermore, it is easy to see that the maps in $\overline{\mathcal E}$ are stable under pullbacks: indeed, fully faithful functors between preordered sets are pullback-stable, and so are surjective maps in $\mathsf{Set}$. It follows that $\overline{\mathcal E} \subset  {\mathcal E}'$. 

We then remark that $(\overline{\mathcal E},{\mathcal M}^*)$ is a factorization system. Indeed, in Proposition \ref{canonical-factor} we have shown the existence of an $(\overline{\mathcal E}, {\mathcal M}^*)$-factorization of any morphism in $\mathbb C$. Clearly, both the classes $\overline{\mathcal E}$ and ${\mathcal M}^*$ are stable by composition with isomorphisms. Let us then show that these two classes of morphisms are orthogonal. Consider a commutative square
$$\xymatrix{(A, \rho) \ar[r]^e \ar[d]_u & (B,\sigma) \ar[d]^v \\
(C, \tau) \ar[r]_m & (D, \psi) 
}
$$
where $e \in \overline{\mathcal E}$ and $m \in {\mathcal M}^*$. Since ${\mathsf{Ker}}_{\N}(e) = (A, Eq(e))$ (Lemma \ref{characterization-bar-E}) and ${\mathsf{Ker}}_{\N}(m) \in \mathsf{ParOrd(Set)}$ (Corollary \ref{characterization-N-k}), we deduce that $u \colon (A, \rho) \rightarrow (C, \tau)$ coequalizes the projections $e_1 \colon Eq(e) \rightarrow A$ and $e_2 \colon Eq(e) \rightarrow A$ of the kernel pair of $e$. It follows that there is a unique $\alpha \colon (B, \sigma) \rightarrow (C, \tau)$ such that $\alpha e = u$, and this morphism is also such that $m \alpha = v$.

The class ${\mathcal E' }$ is always orthogonal to the morphisms in ${\mathcal M}^*$ (by Proposition $6.7$ in \cite{CJKP}). By taking into account that $(\overline{\mathcal E},{\mathcal M}^*)$ is a factorization system, we deduce that 
${\mathcal E' } \subset \overline{\mathcal E}$, and therefore ${\mathcal E' } = \overline{\mathcal E}$ as desired.
\end{proof}

\begin{remark}
\emph{It would be interesting to extend Proposition \ref{monotone-light} to a more general context which would include the case of $\mathsf{PreOrd} (\mathsf{Set})$ as a special case. As a first step in this direction it would be useful to generalize Proposition \ref{effective} to the category $\mathsf{Preord(\mathbb{C})}$, where $\mathbb C$ is an exact category satisfying suitable additional conditions. We intend to investigate this question in the future.}
\end{remark}

\subsection*{Topological interpretation.}
In what follows we will provide a description, in topological terms, of the morphisms in the classes $\cal E'$ and in $\cal M^*$ determined by the adjunction \eqref{fund-set}. In order to do this, it is useful to recall some topological preliminaries, for the reader's convenience. A topological space is called  \emph{Alexandroff-discrete} provided that the intersection of any non-empty collection of open sets is open. As it is well-known, the subcategory $\Alex$ of $\mathsf{Top}$ whose objects are Alexandroff-discrete spaces is canonically isomorphic to the category $\mathsf{Preord(Set)}$ of preordered sets (whose morphisms are clearly all monotone mappings). More precisely, given any preordered set $(X,\rho)$, consider the topology $\mathcal T_\rho$ on $X$ consisting of all subsets $A$ of $X$ such that, whenever $a\in A$ and $x\in X$ is such that $x\rho a$, then $x\in A$. Then it is easily seen that $(X,\mathcal T_\rho)$ is an Alexandroff-discrete space. For every $x_0\in X$, let 
$$\{x_0\}^{\leftarrow_\rho}:=\{x\in X\mid x\rho x_0 \}, \qquad \{x_0\}^{\to_\rho}:=\{x\in X\mid x_0\rho x\}.$$ Then, $\{x_0\}^{\leftarrow_\rho}$ is open in $X$, equipped with the Alexandroff-discrete topology $\mathcal T_\rho$ canonically associated to $\rho$. More precisely, $\{x_0\}^{\leftarrow_\rho}$ is the intersection of all open sets of $(X,\cal T_\rho)$ containing $x_0$. Dually, $\{x_0\}^{\to_\rho}$ is the closure of $\{x_0\}$. Moreover, a mapping $f:(X,\rho)\to (Y,\sigma)$ of preordered sets is a morphism in $\mathsf{Preord(Set)}$ if and only if $f$ is continuous, as a mapping $(X,\mathcal T_\rho)\to (X,\mathcal T_\sigma)$.

Now, let $(X,\mathcal T)$ be any topological space. Then, $\mathcal T$ induces a natural preorder $\leq_{\cal T}$ on $X$ defined by setting, for every $x,y\in X$, $x\leq_{\cal T}y:\!\!\iff y\in \overline{\{x\}}$. If $\rho$ is any preorder on $X$, then $\leq_{\cal T_{\rho}}=\rho$. Conversely, given any Alexandroff-discrete topology $\cal A$, then $\cal A=\cal T_{\leq_{\cal A}}$. The previous arguments show the well-known fact that the categories $\mathsf{Alex}$ and $ \mathsf{Preord(Set)}$ are isomorphic.

We state now a straightforward consequence of the discussion above. Recall that a topology on a set $X$ is a \emph{partition topology} if every open set of the topology is clopen (that is, there is a partition of the space $X$ that is a basis for the topology). 
\begin{lemma}\label{partition}
	Let $X$ be a topological space and let $\rho$ denote the preorder induced by the topology of $X$. 
	\begin{enumerate}
		\item $X$ is a T$_0$ space if and only if $\rho$ is a partial order. 
		\item If $X$ is Alexandroff-discrete, then the topology of $X$ is a partition topology if and only if $\rho$ is an equivalence relation. 
	\end{enumerate}
\end{lemma}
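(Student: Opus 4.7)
The plan is to translate each topological condition into a purely order-theoretic statement about $\rho$, using the two canonical descriptions recalled just before the lemma: the closure of a singleton is given by $\overline{\{x\}}=\{x\}^{\to_\rho}$, while in the Alexandroff-discrete case the minimal open neighbourhood of $x$ is $\{x\}^{\leftarrow_\rho}$.

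For part $(a)$, I would start from the standard fact that $X$ is $T_0$ exactly when distinct points have distinct closures, i.e.\ $\overline{\{x\}}=\overline{\{y\}}$ forces $x=y$. Using $\overline{\{x\}}=\{x\}^{\to_\rho}$, the condition $y\in\overline{\{x\}}$ reads $x\rho y$, and once this is observed one checks that $x\rho y$ together with $y\rho x$ is equivalent to $\overline{\{x\}}=\overline{\{y\}}$ (each inclusion coming from the fact that $\overline{\{x\}}$ is the smallest closed set containing $x$). Thus antisymmetry of $\rho$ is exactly the $T_0$ separation axiom.

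For part $(b)$, assume first that $\rho$ is an equivalence relation. Then for every $x$ the set $U_x:=\{x\}^{\leftarrow_\rho}=\{y\mid y\rho x\}$ coincides by symmetry with $\{x\}^{\to_\rho}=\overline{\{x\}}$, so $U_x$ is clopen. The sets $U_x$ are the $\rho$-equivalence classes, hence form a partition of $X$, and since each $U_x$ is the minimal open neighbourhood of $x$ in the Alexandroff-discrete topology, this partition is a basis. So the topology is a partition topology. Conversely, assume the topology is generated by a partition $\{P_i\}_{i\in I}$ into clopen blocks. For $x\in P_{i(x)}$, the minimal open neighbourhood $U_x$ is at the same time contained in $P_{i(x)}$ (as $P_{i(x)}$ is open and contains $x$) and a union of blocks $P_j$ (as an open set), so $U_x=P_{i(x)}$. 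Hence $U_x$ is closed, which gives $\overline{\{x\}}\subseteq U_x$, i.e.\ $x\rho y\Rightarrow y\rho x$. For the reverse implication, $y\rho x$ means $y\in U_x=P_{i(x)}$, and applying the same argument to $y$ yields $U_y=P_{i(y)}=P_{i(x)}=U_x$, so $x\in U_y$, that is $x\rho y$. Therefore $\rho$ is symmetric.

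I expect part $(a)$ to be a direct unpacking of definitions. The only mildly delicate step is the converse direction in part $(b)$: one must pin down that the minimal open neighbourhood $U_x$ must equal a single block $P_{i(x)}$ of the partition rather than a larger union, and then deduce from $U_x=U_y$ (for $y$ in the same block) that $\rho$ is symmetric. The Alexandroff-discrete hypothesis is essential precisely there, since without the existence of a smallest open neighbourhood one cannot identify $U_x$ with a partition block.
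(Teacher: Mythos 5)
Your proof is correct, and for part (a) and for the direction ``$\rho$ an equivalence relation $\Rightarrow$ partition topology'' it follows essentially the same line as the paper: identify $\{x\}^{\leftarrow_\rho}$ with the equivalence class of $x$, use Alexandroff-discreteness to see it is the (open) minimal neighbourhood, and conclude that the classes form a basis. Where you diverge is the direction ``partition topology $\Rightarrow$ $\rho$ symmetric'': you go through the minimal open neighbourhood $U_x$, show it equals the partition block of $x$, hence is closed, and deduce $\overline{\{x\}}\subseteq U_x$, which is symmetry. The paper argues more directly: if $y\in\overline{\{x\}}$ and $\Omega$ is any open neighbourhood of $x$, then $\Omega$ is clopen, so $\overline{\{x\}}\subseteq\overline{\Omega}=\Omega$, whence $y\in\Omega$ and $x\in\overline{\{y\}}$. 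That argument works in an arbitrary topological space with a partition topology, and the authors note this explicitly. Consequently your closing remark mis-locates the role of the hypothesis: Alexandroff-discreteness is needed in \emph{your} argument for that direction, but the implication itself holds without it; the direction that genuinely requires Alexandroff-discreteness is the one from equivalence relation to partition topology, as the paper illustrates right after the lemma with a connected $T_1$ space (there the induced preorder is equality, an equivalence relation, yet the topology is not a partition topology). Finally, the second half of your converse argument (deriving $y\rho x\Rightarrow x\rho y$ via $U_y=U_x$) is redundant: the inclusion $\overline{\{x\}}\subseteq U_x$, holding for every $x$, already expresses symmetry in full.
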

\begin{proof}
	By definition, $x\rho y$ if $y\in \overline{\{x\}}$. Thus (a) is clear (and well-known).
	
	(b). Assume that $X$ has a partition topology, let $x,y\in X$ be such that $y\in\overline{\{x\}}$, and take an open (and, by assumption, clopen) neighborhood $\Omega$ of $x$. Then $y\in \overline{\{x\}}\subseteq \overline{\Omega}=\Omega$, proving that  $x\in \overline{\{y\}}$ (notice that the assumption that $X$ is Alexandroff-discrete is not needed for this implication). Conversely, assume that $\rho$ is an equivalence relation and notice that, since $X$ is an Alexandroff-discrete space, $\{x\}^{\leftarrow_\rho}$ is an open set (that coincides with the equivalence class of $x$), for every $x\in X$. Now it is clear that each open set is the union of the equivalence classes of its elements. It follows that the partition determined by $\rho$ is a basis for the given Alexandroff-discrete topology. 
\end{proof}
It is worth noting that in case $X$ is not Alexandroff-discrete and the preorder induced by the topology is an equivalence relation, the topology can fail to be a partition topology: take for instance a set with at least two points equipped with  any non trivial, connected and T$_1$ topology.

In view of Lemma \ref{partition} and  the isomorphism between $\mathsf{Preord(Set)}$ and $\Alex$, the following topological counterpart of Lemma \ref{N-torsion} and Corollary \ref{adj-pre-par} is now clear. 
\begin{proposition}
	Let $\KoAlex $ (resp., $\PartAlex$) be the full subcategory of $\Alex$ whose objects are the T$_0$ Alexandroff-discrete spaces  (resp., the Alexandroff-discrete spaces with a partition topology). 
	\begin{enumerate}
		\item $(\PartAlex,\KoAlex)$ is a pretorsion theory in $\Alex$. 
		\item{ $\KoAlex$} is a (regular epi-)reflective subcategory of $\Alex$. More precisely there is an adjunction 
		\begin{equation*}\label{adj-alex}
		\xymatrix@=30pt{
			{\Alex \, } \ar@<1ex>[r]_-{^{\perp}}^-{F} & {\, \KoAlex ,\, }
			\ar@<1ex>[l]^U  }
		\end{equation*}
		where $U$ is the forgetful functor and $F$ is its left adjoint. For any object $X$ in $\Alex$, the $X$-component of the unit of the adjunction is the canonical projection $\pi_X:X\to X_0$, where $X_0$ is the T$_0$ quotient of $X$ (that is, $X_0$ is the quotient space of $X$ obtained by identifying points that have the same closure). 
	\end{enumerate}

\end{proposition}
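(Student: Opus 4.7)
The plan is to transport everything from $\mathsf{PreOrd(Set)}$ to $\Alex$ via the category isomorphism recalled just before the statement. By Lemma \ref{partition}, if $\rho$ denotes the preorder canonically associated with an Alexandroff-discrete topology on a set $X$, then $X \in \KoAlex$ if and only if $\rho$ is a partial order, and $X \in \PartAlex$ if and only if $\rho$ is an equivalence relation. Hence, under the isomorphism $\Alex \cong \mathsf{PreOrd(Set)}$, the full subcategories $\PartAlex$ and $\KoAlex$ correspond respectively to $\mathsf{Eq}(\mathsf{Set})$ and $\mathsf{ParOrd}(\mathsf{Set})$, while the intersection corresponds to $\mathsf{Dis}(\mathsf{Set})$, i.e.\ Alexandroff-discrete spaces whose associated preorder is the identity: equivalently, discrete topological spaces.

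For part~(a), I would simply invoke Lemma \ref{N-torsion} applied to $\mathbb C = \mathsf{Set}$, which gives that $(\mathsf{Eq}(\mathsf{Set}), \mathsf{ParOrd}(\mathsf{Set}))$ is a pretorsion theory in $\mathsf{PreOrd}(\mathsf{Set})$; since pretorsion theories are preserved by equivalences of categories, the pair $(\PartAlex, \KoAlex)$ is a pretorsion theory in $\Alex$. Likewise, for part~(b), Corollary \ref{adj-pre-par} (again with $\mathbb C = \mathsf{Set}$) gives that $\mathsf{ParOrd}(\mathsf{Set})$ is a (regular epi)-reflective subcategory of $\mathsf{PreOrd}(\mathsf{Set})$, and transporting this adjunction along the isomorphism of categories yields the required adjunction between $\Alex$ and $\KoAlex$ with $U$ the forgetful functor.

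The only step requiring a genuine verification is the identification of the unit $\eta_X$ with the T$_0$-quotient map $\pi_X \colon X \to X_0$. Recall from the proof of Lemma \ref{N-torsion} that the unit at a preorder $(A, \rho)$ is the canonical projection $A \to A/\!\sim_\rho$, where $\sim_\rho = \rho \wedge \rho^o$. For $x, y \in X$ one has $x \sim_\rho y$ if and only if $x \in \overline{\{y\}}$ and $y \in \overline{\{x\}}$, which is exactly the Kolmogorov equivalence whose quotient produces the T$_0$-reflection $X_0$. Therefore the underlying map of $\eta_X$ coincides with $\pi_X$; and since the Alexandroff-discrete topology is recovered from the preorder, the topology on $A/\!\sim_\rho$ coincides with the quotient topology on $X_0$. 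This identifies the unit with $\pi_X$, completing the proof.

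The most delicate point is only the bookkeeping in this last identification (checking that the quotient topology on the T$_0$-reflection really is the Alexandroff-discrete topology associated with the induced partial order $\pi(\rho)$), but this is immediate from the fact that the isomorphism $\Alex \cong \mathsf{PreOrd(Set)}$ sends regular epimorphisms to regular epimorphisms and from the explicit description of $\pi(\rho)$ given in the proof of Lemma \ref{N-torsion}.
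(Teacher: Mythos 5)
Your proposal is correct and follows essentially the same route as the paper, which deduces the proposition directly from Lemma \ref{partition} and the isomorphism $\Alex\cong\mathsf{PreOrd(Set)}$, transporting Lemma \ref{N-torsion} and Corollary \ref{adj-pre-par}. Your extra verification that $\sim_\rho$ is exactly the relation identifying points with the same closure, so that the unit becomes the $T_0$-quotient map $\pi_X$, is precisely the bookkeeping the paper leaves implicit.
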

\begin{remark}
	\emph{It is worth noting that in \cite[6.6]{pretorsion} a more general result is provided: if $\mathsf{Part}$ (resp., $\mathsf{T_0}$) are the full subcategories of $\mathsf{Top}$ whose objects are spaces equipped with a partition topology (resp., $T_0$ spaces), then $(\mathsf{Part},\mathsf{T_0})$ is a pretorsion theory in $\mathsf{Top}$. }
\end{remark}

The following fact is a topological form of \cite[Proposition 2.2]{JS}.
\begin{lemma}\label{top-regular-epi}
	Let $f\colon X\to Y$ be a surjective morphism in $\Alex$. Then the following conditions are equivalent.
	\begin{enumerate}
		\item $f$ is a regular epimorphism (in $\Alex$). 
		\item  The topology of $Y$ is the finest Alexandroff-discrete topology on $Y$ making $f:X\to Y$ a continuous mapping. 
	\end{enumerate}
\end{lemma}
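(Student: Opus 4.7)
The plan is to transport the statement, via the isomorphism $\Alex \cong \mathsf{Preord(Set)}$ recalled earlier, to an assertion about preorders, and then invoke the already-cited result \cite[Proposition~2.2]{JS} (which was the key ingredient in Lemma \ref{characterization-bar-E}). That proposition characterises surjective regular epimorphisms $f\colon (X,\rho)\to(Y,\sigma)$ in $\mathsf{Preord(Set)}$ as those for which $\sigma = f(\rho)$, or equivalently, those for which $\sigma$ is the smallest preorder on $Y$ making $f$ monotone.

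The crucial observation needed to translate this into topological language is that the assignment $\rho \mapsto \mathcal{T}_\rho$ of Alexandroff topologies to preorders is \emph{order-reversing}: if $\rho_1\subseteq\rho_2$ are two preorders on $Y$ and $A\in\mathcal{T}_{\rho_2}$, then for every $a\in A$ and every $x\in Y$ with $x\,\rho_1\,a$ one has $x\,\rho_2\,a$, so $x\in A$; hence $A\in\mathcal{T}_{\rho_1}$, i.e.\ $\mathcal{T}_{\rho_2}\subseteq\mathcal{T}_{\rho_1}$. Under the identification $\leq_{\mathcal{T}}\mapsto\mathcal{T}$, continuity of $f\colon (X,\mathcal{T}_X)\to(Y,\mathcal{T}_Y)$ corresponds exactly to monotonicity of $f\colon(X,\leq_{\mathcal{T}_X})\to(Y,\leq_{\mathcal{T}_Y})$; hence ``smallest preorder on $Y$ making $f$ monotone'' corresponds precisely to ``finest Alexandroff-discrete topology on $Y$ making $f$ continuous''.

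Putting the two ingredients together I would argue as follows. Assume $f\colon X\to Y$ is a surjective morphism in $\Alex$, and let $\rho,\sigma$ denote the preorders canonically associated with the topologies on $X$ and $Y$ respectively. Since the categorical isomorphism $\Alex\cong\mathsf{Preord(Set)}$ preserves regular epimorphisms, $f$ is a regular epimorphism in $\Alex$ if and only if $f\colon(X,\rho)\to(Y,\sigma)$ is one in $\mathsf{Preord(Set)}$, which by \cite[Proposition~2.2]{JS} is equivalent to $\sigma$ being the smallest preorder on $Y$ making $f$ monotone. By the order-reversing correspondence above, this is in turn equivalent to $\mathcal{T}_\sigma$ being the finest Alexandroff-discrete topology on $Y$ making $f$ continuous, which is condition (b).

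The only non-trivial point is the order-reversal of the correspondence between preorders and Alexandroff topologies; the rest is a routine unpacking of the isomorphism $\Alex \cong \mathsf{Preord(Set)}$ and an appeal to the cited result.
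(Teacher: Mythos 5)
Your overall route is the same as the paper's: the paper offers no separate proof of this lemma, presenting it exactly as the topological translation of \cite[Proposition 2.2]{JS} through the isomorphism $\Alex\cong\mathsf{PreOrd}(\mathsf{Set})$, and your observation that $\rho\mapsto\mathcal T_\rho$ is an order-reversing bijection, so that ``smallest preorder on $Y$ making $f$ monotone'' corresponds to ``finest Alexandroff-discrete topology on $Y$ making $f$ continuous'', is precisely the intended content of that translation.

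One claim in your write-up is wrong, however, and should be removed. You state that \cite[Proposition 2.2]{JS} characterizes the surjective regular epimorphisms $f\colon(X,\rho)\to(Y,\sigma)$ as those with $\sigma=f(\rho)$, ``or equivalently'' those for which $\sigma$ is the smallest preorder on $Y$ making $f$ monotone. These two conditions are not equivalent in $\mathsf{PreOrd}(\mathsf{Set})$: $f(\rho)$ need not be transitive, so the smallest preorder making $f$ monotone is in general the reflexive-transitive closure of $f(\rho)$, which can be strictly larger. For instance, take $X=\{a,b,a',b'\}$ with $\rho$ generated by $a\le b$ and $a'\le b'$, let $Y=\{y_0\le y_1\le y_2\}$ be a three-element chain, and let $f(a)=y_0$, $f(b)=f(a')=y_1$, $f(b')=y_2$: then $f$ is a regular epimorphism (it is the coequalizer of the two points $b$ and $a'$) and $\sigma$ is the smallest preorder making $f$ monotone, yet $(y_0,y_2)\in\sigma\setminus f(\rho)$. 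Thus $\sigma=f(\rho)$ is a strictly stronger condition than being a regular epimorphism; in the paper the two are only identified in Lemma \ref{characterization-bar-E} under the extra hypothesis $Eq(f)\subseteq\rho$, which is exactly what makes $f(\rho)$ transitive. Fortunately, the rest of your argument uses only the ``smallest preorder'' (quotient-structure) form of the characterization, which is the correct one and the one the lemma encodes topologically, so the proof stands once the spurious parenthetical equivalence is deleted; had you argued with $\sigma=f(\rho)$ instead, you would have characterized a smaller class of morphisms (the pullback-stable regular epimorphisms) and proved a false statement.
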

Keeping in mind Proposition \ref{monotone-light} and the discussion above, there is a monotone-light factorization system $(\cal E',\cal M^*)$  in the category $\Alex$. The next goal is to describe it. 
\begin{proposition}\label{monotone-light-top}
	Let $(\cal E', \cal M^*)$ be the monotone-light factorization system in $\Alex$ induced by the natural  adjunction  (\ref{adj-alex})
	$$ \xymatrix@=30pt{
		{\Alex \, } \ar@<1ex>[r]_-{^{\perp}}^-{F} & {\, \KoAlex ,\, }
		\ar@<1ex>[l]^U  }$$

	and let $f:A\to B$ be a morphism in $\Alex$. 
	\begin{enumerate}
		\item $f\in \cal M^*$ if and only if each fibre of $f$ is T$_0$, as a subspace of $A$. 
		\item $f\in \cal E'$ if and only if the following conditions are satisfied:
		\begin{itemize}
			\item $f$ is surjective. 
			\item The topology of $B$  is the finest Alexandroff-discrete topology on $B$ making $f:A\to B$ a morphism in $\Alex$. 
			\item Each fibre of $f$ has the trivial topology.  
		\end{itemize}
	\end{enumerate}
\end{proposition}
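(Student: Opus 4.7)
The plan is to translate each of the two characterizations already obtained in $\mathsf{Preord(Set)}$ into the language of $\Alex$ by applying, termwise, the isomorphism $\mathsf{Preord(Set)}\cong \Alex$ together with Lemma \ref{partition} and Lemma \ref{top-regular-epi}.

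For part (a), I would start from Proposition \ref{character.covering}: a morphism $f\colon (A,\rho)\to(B,\sigma)$ in $\mathsf{Preord(Set)}$ belongs to $\mathcal M^*$ if and only if every fibre $(f^{-1}(b),\rho)$ is a partial order. Under the isomorphism between $\mathsf{Preord(Set)}$ and $\Alex$, the preorder induced on the set $f^{-1}(b)$ by restriction of $\rho$ corresponds exactly to the preorder induced by the subspace topology on $f^{-1}(b)\subseteq A$. Applying Lemma \ref{partition}(a) to each fibre then yields the equivalence with the $T_0$ condition stated in (a).

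For part (b), I would first invoke Proposition \ref{monotone-light}, which identifies $\mathcal E'$ with the class $\overline{\mathcal E}$ of surjective fully faithful morphisms in $\mathsf{Preord(Set)}$. Applying the equivalence $(a)\Leftrightarrow(c)$ of Lemma \ref{characterization-bar-E} recasts this as the conjunction of two conditions: $f$ is a regular epimorphism in $\mathsf{Preord(Set)}$ and $Eq(f)\subseteq \rho$. The regular-epi condition translates via Lemma \ref{top-regular-epi} into: $f$ is surjective and the topology of $B$ is the finest Alexandroff-discrete topology on $B$ for which $f$ is continuous, which gives the first two bullets. It remains to check that the condition $Eq(f)\subseteq\rho$ is equivalent to each fibre of $f$ carrying the trivial (indiscrete) topology. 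If $Eq(f)\subseteq \rho$, then for every $a,a'\in f^{-1}(b)$ both $a\rho a'$ and $a'\rho a$ hold, so on the fibre the restricted preorder is the total relation, whose associated Alexandroff topology has only $\emptyset$ and the whole fibre as open sets. Conversely, if each fibre has the indiscrete topology, the associated preorder on the fibre is the total relation, so $Eq(f)\subseteq \rho$.

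The only step that requires a little care is the last equivalence, namely that ``every fibre carries the indiscrete topology'' corresponds to $Eq(f)\subseteq \rho$; this relies on the explicit description of $\mathcal T_\rho$ recalled before Lemma \ref{partition} (noting that an open set of $(X,\mathcal T_\rho)$ containing a point must contain the whole $\rho$-downset of that point, so if $\rho$ is total on a set, only $\emptyset$ and the full set are open). Once this routine verification is in place, the combination of Proposition \ref{character.covering}, Proposition \ref{monotone-light}, Lemma \ref{characterization-bar-E}, Lemma \ref{top-regular-epi} and Lemma \ref{partition}(a) yields both statements of the proposition.
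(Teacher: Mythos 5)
Your proposal is correct and follows essentially the same route as the paper: part (a) via Proposition \ref{character.covering} and Lemma \ref{partition}(a), and part (b) by combining Proposition \ref{monotone-light} with Lemma \ref{characterization-bar-E} and Lemma \ref{top-regular-epi}, reducing everything to the equivalence between $Eq(f)\subseteq\rho$ and the fibres being indiscrete, which you verify (as the paper does) from the downset description of the Alexandroff topology. No gaps.
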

\begin{proof}
	Statement (a) immediately follows from Proposition \ref{character.covering} and Lemma \ref{partition}(a). 
	
	(b). By Lemma \ref{characterization-bar-E} and Proposition \ref{monotone-light}, the class $\cal E'$ consists of all regular epimorphism $f:(A,\rho)\to (B,\sigma)$ satisfying $Eq(f)\subseteq \rho$. By Lemma \ref{top-regular-epi}, the first two conditions of statement (b) are equivalent to saying that $f$ is a regular epimorphism. Thus, it is sufficient to prove that a morphism $f:(A,\rho)\to (B,\sigma)$ satisfies $Eq(f)\subseteq \rho$ if and only if each fiber of $f$ has the trivial subspace topology. Suppose $Eq(f)\subseteq \rho$, fix an element $b\in B$ and let $\Omega$ be an open subset of $A$ such that $V:=\Omega\cap f^{-1}(b)$ is non-empty. Pick an element $v\in V$ and fix some $x\in f^{-1}(b)$. In particular, $f(x)=b=f(v)$, that is $x Eq(f) v$. Since, by assumption, $\rho $ is finer than $Eq(f)$, it follows $x\rho v$, in particular, and since $v\in \Omega$ and $\Omega$ is open we infer $x\in \Omega$, that is $x\in V$, proving that $V=f^{-1}(\Omega)$.
	
	Conversely, assume that each fibre of $f$ has the trivial topology, and take elements $x,y\in A$ such that $f(x)=f(y)=:b$. If $x \!\!\!\not\!\!\rho y$, $U:=f^{-1}(b)\setminus \overline{\{x\}}$ is a non-empty open subspace of $f^{-1}(b)$ (since $y\in U$) and thus, by assumption, $U=f^{-1}(b)$, against the fact that $x\in f^{-1}(b)\setminus U$. The conclusion is now clear. 
\end{proof}

\end{document}